\theoremstyle{plain}
\newtheorem{lemma}{Lemma}
\newtheorem{proposition}{Proposition}
\newtheorem{remark}{Remark}
\newtheorem{theorem}{Theorem}
\numberwithin{equation}{section}
\let\eps=\varepsilon
\begin{document}
\title{Linear response due to singularities}

\author{Wael Bahsoun}

\address{Department of Mathematical Sciences, Loughborough University,
Loughborough, Leicestershire, LE11 3TU, UK}
\email{W.Bahsoun@lboro.ac.uk}

\author{Stefano Galatolo} 
\address{Dipartimento di Matematica, Universita di Pisa, Via \ Buonarroti 1,Pisa -
Italy}
\email{galatolo@dm.unipi.it}

\date{\today}
\thanks{}
\keywords{Linear response, Lorenz like maps}
\subjclass{Primary 37A05, 37E05}
\begin{abstract} 
 
\noindent {It is well known that  a family  of tent-like maps with bounded derivatives has no linear response for typical deterministic perturbations changing the value of the turning point. In this note we prove the following result: if we consider a tent-like family with a  \emph{cusp} at the turning point, we recover the linear response.
 More precisely,  let  $T_\eps$ be a family of such cusp maps generated by changing the value of the turning point of $T_0$ by a deterministic perturbation and let $h_\eps$ be the corresponding invariant density. We  prove that $\eps\mapsto h_\eps$ is differentiable in $L^1$ and provide a formula for its derivative.}
\end{abstract}
\maketitle

\section{Introduction}

Let $M$ be a compact manifold with a reference volume and $T:M\rightarrow M$ be a map, whose
iterates determine the dynamics. A $T$-invariant measure $\mu$; i.e. $%
T_*\mu=\mu$, is said to be \emph{physical} if there is a positive volume set 
$B$ such that for any continuous observable $f:M\rightarrow \mathbb{R}$%
\begin{equation*}
\underset{n\rightarrow \infty }{\lim }\frac1n\sum_{i=0}^{n-1}f(T^{i}(x)) =\int_{M}f~d\mu
\end{equation*}
for all $x\in B$. An important question is to study how such measures vary, in
an appropriate topology, under suitable perturbations of $T$. Namely,
consider a family $\{(M,T_{{\varepsilon} })\}_{{\varepsilon}\in V}$, where $V
$ is a small neighbourhood of $0$, $T_0:=T$ and $T_{{\varepsilon}
}\rightarrow T_{0}$, as ${\varepsilon} \rightarrow 0$, in some suitable
topology. For many chaotic systems it turns out that $T_{%
\varepsilon}$ admits a unique physical measure $\mu _{{\varepsilon} }$.

The system $(M,T_{0},\mu _{0})$ is called statistically stable if the
map ${\varepsilon} \longmapsto \mu _{{\varepsilon} }$ is continuous, in an
appropriate topology, at ${\varepsilon} =0$. Quantitative
statistical stability is provided by quantitative estimates on its modulus
of continuity (see e.g. \cite{A,K, Gpre, Gcsf, V, BLK}). In the case where ${\varepsilon} \longmapsto \mu _{{%
\varepsilon} }$ is differentiable in some sense, the system is also said to
admit \emph{linear response}; i.e., the `derivative' $\dot{\mu}_0$
represents the first order term of the response of the system to the
perturbation 
\begin{equation*}
\mu _{\varepsilon }=\mu _{0}+\dot{\mu}_0{\varepsilon} +o({\varepsilon} )
\end{equation*}%
where the error { term} is understood in an appropriate topology.

Linear response results in the context of deterministic dynamics was first
obtained in the case of uniformly hyperbolic systems \cite{R} (see also \cite{KP}). Nowadays linear response results are known
for systems outside the uniformly hyperbolic setting (see \cite{BS, BT,D,GS, K},
the survey article \cite{B} and references therein.) For systems with
discontinuities or critical points the situation is quite
complicated even in the presence of uniform expansion. Indeed, for suitable small
perturbations of piecewise expanding maps there are several examples that lack
statistical stability or linear response (see \cite{K, M, b2} and \cite{AK} for recent results in this direction). In the literature there are some results indicating that perturbations
which are not changing the \emph{topological class}\footnote{%
A perturbation does not change the topological class if it changes a system
to a system which is topologically conjugated to it} or tangent to the
topological class, linear response is likely to occur, while for
perturbations which are transversal to the topological class there is no
linear response (see \cite{BS} for the case of piecewise expanding unimodal
maps, \cite{BBS} and \cite{BS2} for smooth unimodal maps and \cite{GS} for
results along this line in the case of rotations).

In this paper we study perturbations of one dimensional tent-like piecewise expanding maps with unbounded derivatives at the turning point (a cusp singularity). We prove linear response for a large class of deterministic perturbations also changing the image at the turning point and hence the topological class. The existence of such a linear response is due to the singularity. Indeed, unlike the usual tent maps with bounded derivative, which do not admit linear response for the same kind of perturbations (\cite{B,M}), linear response in the singular case is attained due to fact that the cusp has a `regularizing effect' at the level of the action of the associated transfer operators. 
The present paper  studies this regularization phenomenon due to a singularity, in the relatively simple case of tent-like maps. We believe however that this phenomenon might also appear in other important classes of dynamical systems and this motivates further studies in this direction.

 Cusp like  singularities and unbounded derivatives appear in several important systems including Lorenz-type maps \cite{BMR, BR} and  billiard maps (see e.g. \cite{BDL18}). Unlike the case of Anosov flows \cite{BL} where linear and higher order response is proved, in the case of the classical Lorenz flow only statistical stability is known \cite{AS,BMR, BR} although numerical evidence suggests that the Linear Response may hold in this case (\cite{CR}, \cite{SCW}).
  By highlighting the role of singularities in studying linear response, we hope that the results of the present paper contribute to the understanding of linear response for Lorenz-like flows and billiard maps.

The paper is organised as follows. In Section \ref{Sec2} we introduce the
class of systems we consider and state the main result, Theorem \ref{thm:main}, of the paper. In Section  \ref{Sec3} we prove Theorem \ref{thm:main} in a series of lemmas. In Section \ref{example} a concrete family of maps satisfying the assumptions of Section \ref{Sec2}. Concluding remarks are presented in Section \ref{Sec5}.

\noindent {\bf Acknowledgements.} The research of W. Bahsoun is supported by EPSRC grant EP/V053493/1. W. Bahsoun would like to thank the University of Pisa for its hospitality during his visit to S. Galatolo. The research of S.G. was partially supported by  the
research project "Stochastic properties of dynamical systems" (PRIN 2022NTKXCX) of the Italian Ministry of Education and Research and by a grant from MIUR (Dipartimenti di Eccellenza DM 11/05/2017, n. 262).
The authors thank M. Ruziboev,  D. Smania and V. Baladi for fruitful discussions and for suggesting useful references. The authors also thank anonymous referees for careful reading and useful comments.

\section{Family of maps and statement of the main result}\label{Sec2}

For $\eps \in [0,\delta)$, $\delta >0$, consider a family $T_\eps:[0,1]\rightarrow \lbrack 0,1]   $ of nonsingular maps, with respect to Lebesgue measure $m$ on $[0,1]$. The transfer operator associated with $T_\eps$, denoted by $L_{ T_\eps}$, is defined by duality as follows: for $f\in L^1, g\in L^\infty$
$$\int_0^1L_{T_\eps} f\cdot g dm=\int_0^1f\cdot g\circ T_\eps dm.$$
Let $C\ge0$, $c\in (0,1)$, $\beta\in (-1,-\frac34)$,
we assume that $T_\eps$ satisfies the following assumptions:

\begin{enumerate}
\item[(A1)] $T_{0,\eps}:=T_\eps |_{[0,c)}$  and $T_{1,\eps}:=T_\eps |_{(c,1]}$ are one to one.

\item [(A2)] $T_\eps(0)=0,T_\eps(1)=0,\lim_{x\to c^{\pm}}T_\eps(x)=a_\eps\in \lbrack 0,1]$

\item[(A3)] $T_\eps|_{[0,1]\setminus\{c\}}\in C^{3}$.

\item[(A4)] \label{teta} $\sup_{\varepsilon\in[0,\delta)}\inf_{x\in \lbrack 0,1]\setminus\{c\}}|T_\eps^{\prime }(x)|\geq \theta>1$.

\item[(A5)] $T_0$ is topologically mixing on $[0,a_0]$.\footnote{See \cite{V}, (E3) page 46 for details.}

\item[(A6)] { $\forall \eps\in [0,\delta) $ } $\lim_{x\rightarrow c^{\pm }}T_\eps^{\prime }(x)=\pm \infty $ and 
$\lim_{x\rightarrow c^{\pm }}\frac{|T_\eps^{\prime }(x)|}{|x-c|^{\beta }}=C_{\varepsilon,1}$.

\item[(A7)]{  $\forall \eps\in [0,\delta) $} $\lim_{x\rightarrow c^{\pm }}\frac{|T_\eps^{\prime \prime }(x)|}{
|x-c|^{\beta -1}}=C_{\varepsilon,2}$, $\lim_{x\rightarrow c^{\pm }}\frac{%
|T_\eps^{\prime \prime \prime }(x)|}{|x-c|^{\beta -2}}=C_{\varepsilon,3}$. 

\item [(A8)]\label{unif}{  The power law divergence of the derivatives of $T_\eps$ is uniform in $\varepsilon$: $$\sup_{\eps\in[0,\delta), i\in\{0,1,2\}, x\in[0,1]}|\frac{T_\eps^{(i+1)}(x)}{
(x-c)^{\beta -i}}|<\infty.$$ Furthermore, $\inf_{\eps\in[0,\delta),x\in[0,1]}|\frac{T_\eps^{\prime}(x)}{
(x-c)^{\beta}}|>0.$}

\item[(A9)] For\footnote{We use the notation $W^{i,1}$, $i=1,2$ to denote the usual Sobolev spaces equipped with the norms $\|f\|_{W^{i,1}}=\sum_{k=0}^{i}\|f^{(k)}\|_1$. Also in this assumption we consider the weak norm to be $L^2$. The reason for this will become apparent in Section \ref{Sec3}.} $f\in W^{1,1}$ we require that the transfer operators are close in a mixed norm when $\eps$ is small: 
\begin{equation}\label{near} 
\sup_{\|f\|_{W^{1,1}}\le 1}\|(L_{T_0}-L_{T_\eps})f\|_{ L^2}\to0 
\end{equation}
as $\eps\to 0$.
Moreover, for  $f\in W^{2,1}$, there exists $\partial
_{\eps }L_{T_\eps}f|_{\eps =0 } \in W^{1,1}$ 
such that
\begin{equation}\label{eq:der}
\Vert L_{T_\eps}f-L_{T_0}f-{\varepsilon }(\partial
_{\eps }L_{T_\eps}f|_{\eps =0 })\Vert
_{W^{1,1}}=o(\eps).
\end{equation}
\end{enumerate}
A concrete family of maps satisfying assumptions $(A1),...,(A9)$ is presented in Section \ref{example}, see also Figure \ref{fig:mesh1} for an example of a typical graph in such a family.

Although it is known that $L_{T_\eps}$ admits a spectral gap when acting on the space of functions of generalized  bounded variations \cite{K1},  we study the action of $L_{T_\eps}$ on finer Banach spaces. Namely, the Sobolev spaces $W^{i,1}$, $i=1,2$. In particular, we show that $L_{T_\eps}$ admits a spectral gap on $W^{i,1}$, $i=1,2$. This will allow us to conclude that $T_\eps$ admits an invariant density which is regular enough, in $x$. The latter is essential to derive the linear response formula that we are after in this work. The following result is the main result of the paper.
\begin{theorem}
\label{thm:main} {There is $\delta_2 >0$ such that for $\eps\in[0,\delta_2 )$} $T_\eps$ admits a unique invariant probability density $h_\eps\in W^{2,1}$; moreover, ${\varepsilon}\mapsto h_{\varepsilon}$ is differentiable in $L^1$ at $\eps=0$. In particular, 
\begin{equation*}
h_{\varepsilon} = h_0 + {\varepsilon}(I-{L_{T_0}})^{-1}(q) + o({\varepsilon}%
),
\end{equation*}
here 
\begin{equation}\label{eq:q}
q(x)=\left\{ 
\begin{array}{lr}
L_{T_0} [A_0 { h_0}^{\prime }+ B_0 {h_0}](x) &\text { for }x\in [0,a_0)\\
0&\text{ for } x\in  [a_0,1]
\end{array}
\right. ,  
\end{equation}
with
\begin{equation*}
A_{\varepsilon}=-\left(\frac{\partial_{{\varepsilon}}T_{{\varepsilon}}}{
T^{\prime }_{{\varepsilon}}}\right),\hskip 0.5cm B_{\varepsilon}= \left(%
\frac{\partial_{{\varepsilon}} T_{{\varepsilon}}\cdot T_{{\varepsilon}%
}^{\prime \prime }}{T_{\varepsilon}^{\prime 2}}-\frac{\partial_{{\varepsilon}%
} T_{{\varepsilon}}^{\prime }}{ T_{\varepsilon}^{\prime }}\right)
\end{equation*}
and the $o$ is in the $L^1$-topology.
\end{theorem}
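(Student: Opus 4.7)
The plan is to apply a Keller--Liverani perturbative framework for linear response on the Sobolev scale $W^{1,1}\subset W^{2,1}$ rather than BV. The cusp intervenes twice: it yields a uniform spectral gap for $L_{T_\varepsilon}$ on $W^{2,1}$, and it forces the invariant density $h_0$ to \emph{vanish} at $a_0=T_0(c)$, which makes the ``transition strip'' between $\min(a_0,a_\varepsilon)$ and $\max(a_0,a_\varepsilon)$ contribute $o(1)$ to the $L^1$-difference quotient $(L_{T_\varepsilon}h_0-L_{T_0}h_0)/\varepsilon$. This vanishing of $h_0$ is exactly what fails for a bounded-derivative tent map, where $h_0(a_0)>0$ and the strip contributes $O(1)$, obstructing $L^1$-linear response.

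First I would establish a uniform-in-$\varepsilon$ Lasota--Yorke pair on the scale $L^1\subset W^{1,1}\subset W^{2,1}$. The $k$-th $x$-derivative of $\psi_{j,\varepsilon}$ brings down factors $T_\varepsilon^{(\ell+1)}\circ T_{j,\varepsilon}^{-1}$ for $\ell\le k$; by (A6)--(A8) these blow up like $|y-c|^{\beta-\ell}$, but are compensated by powers of $1/|T'_\varepsilon|\sim |y-c|^{-\beta}$ coming from the chain rule on $T_{j,\varepsilon}^{-1}$. The resulting weights are integrable since $\beta>-1$, while the expansion $\theta^{-k}$ produces the contraction on the top seminorm; the constants are uniform in $\varepsilon$ by (A8). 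Combined with topological mixing (A5), this yields a unique $h_\varepsilon\in W^{2,1}$ together with a spectral gap of $L_{T_\varepsilon}$ on $W^{2,1}$. Using the $L^2$-continuity in (A9), one gets $\|L_{T_\varepsilon}-L_{T_0}\|_{W^{2,1}\to L^1}\to 0$, so Keller--Liverani delivers a uniform spectral gap in a neighbourhood of $0$ and $h_\varepsilon\to h_0$ in $L^1$.

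Next I would identify $q=\lim_{\varepsilon\to 0}(L_{T_\varepsilon}-L_{T_0})h_0/\varepsilon$ in $L^1$. For $x<\min(a_0,a_\varepsilon)$ both preimages exist, and $\varepsilon\mapsto\psi_{j,\varepsilon}(x)$ is differentiable at $\varepsilon=0$ by (A9); computing $\partial_\varepsilon\psi_{j,\varepsilon}|_{\varepsilon=0}$ using $\partial_\varepsilon T_{j,\varepsilon}^{-1}=-(\partial_\varepsilon T_\varepsilon)/T'_\varepsilon$ evaluated at $T_{j,\varepsilon}^{-1}$, and regrouping the product-rule terms, produces exactly $(A_0 h_0'+B_0 h_0)(T_{j,0}^{-1}(x))/|T'_0(T_{j,0}^{-1}(x))|$, summing to $L_{T_0}[A_0 h_0'+B_0 h_0](x)$. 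On the transition strip one branch is missing and the pointwise difference is the full $\psi_{j,\varepsilon}$; because $1/|T'_\varepsilon(y)|$ vanishes like $|y-c|^{-\beta}$ as $y\to c$, we have $h_0(x)\to 0$ as $x\to a_0^-$ at a definite polynomial rate (read off from $h_0\in W^{2,1}$ and the cusp asymptotics), so the $L^1$-mass of $\psi_{j,\varepsilon}/\varepsilon$ over this length-$O(\varepsilon)$ strip is $o(1)$. This both yields the formula for $q$ and shows $q\equiv 0$ on $[a_0,1]$, matching \eqref{eq:q}.

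Finally, inserting the $L^1$-convergence $(L_{T_\varepsilon}-L_{T_0})h_0/\varepsilon\to q$ into the identity
$$(h_\varepsilon-h_0)/\varepsilon=(I-L_{T_\varepsilon})^{-1}\,(L_{T_\varepsilon}h_0-L_{T_0}h_0)/\varepsilon$$
and using the uniform bound on $(I-L_{T_\varepsilon})^{-1}$ on the mean-zero subspace from the first step yields $h_\varepsilon=h_0+\varepsilon(I-L_{T_0})^{-1}(q)+o(\varepsilon)$ in $L^1$. The main obstacle I expect is the transition-strip analysis: obtaining a quantitative pointwise decay of $h_0$ at $a_0^-$ strong enough to give $o(1)$ (not merely $O(1)$) in $L^1$ after division by $\varepsilon$. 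This is precisely where linear response fails for a bounded-derivative tent map and where the cusp plays its essential role; the restriction $\beta<-3/4$ is what keeps this decay compatible with the $W^{2,1}$-Lasota--Yorke estimate.
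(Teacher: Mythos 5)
Your overall architecture --- uniform Lasota--Yorke estimates on a Sobolev scale, Keller--Liverani for a uniform spectral gap and a uniform resolvent bound on the mean-zero subspace, the identity $h_\eps-h_0=(I-L_{T_\eps})^{-1}(L_{T_\eps}-L_{T_0})h_0$, and differentiation of the inverse branches to identify $q$ --- is the same as the paper's. The one genuinely different ingredient is your explicit ``transition strip'' analysis between $a_\eps$ and $a_0$, where you derive the required smallness from the vanishing of $1/|T_\eps'|$ at the cusp. The paper instead packages exactly this boundary issue into hypothesis (A9) (existence and joint continuity of $\partial_\eps\psi_{j,\eps}$, $\partial_x\partial_\eps\psi_{j,\eps}$ on all of $[0,1]\times[0,\delta)$, indicator factors included) and then verifies (A9) for the concrete family in Section \ref{example}. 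Your route is more self-contained on this point and correctly isolates the mechanism that distinguishes the cusp from the bounded-derivative tent map.

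There are, however, two genuine gaps. The decisive one is a topology mismatch in your last step: you establish $(L_{T_\eps}-L_{T_0})h_0/\eps\to q$ only in $L^1$, and then apply $(I-L_{T_\eps})^{-1}$, which Keller--Liverani bounds uniformly only as an operator from $W^{1,1}_0$ to $W^{1,1}$; there is no uniform resolvent bound on mean-zero $L^1$, so $\|(I-L_{T_\eps})^{-1}\bigl((L_{T_\eps}-L_{T_0})h_0/\eps-q\bigr)\|_{L^1}$ does not follow from $L^1$-smallness of the argument. The paper proves $(L_{T_\eps}-L_{T_0})h_0=\eps q+o(\eps)$ in the $W^{1,1}$ norm (mean value theorem in $\eps$ applied to both $\psi_{j,\eps}$ and $\psi_{j,\eps}'$) precisely so that the uniformly bounded resolvent can be applied. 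To repair your argument you must upgrade both the bulk computation and the strip estimate to $W^{1,1}$, i.e.\ also control the $x$-derivative of the difference quotient; on the strip this still works because the weights $1/(T_\eps')^2$ and $T_\eps''/(T_\eps')^3$ also vanish at the cusp, but it must be done. The second gap is your choice of $L^1$ as the weak space for the first Lasota--Yorke pair, justified by the weights being ``integrable since $\beta>-1$.'' The zeroth-order weight $T_\eps''/(T_\eps')^2\sim|y-c|^{-\beta-1}$ (and, at the $W^{2,1}$ level, weights of order $|y-c|^{-2-2\beta}$) is unbounded at the cusp, so $\|gf\|_{1}$ cannot be closed against $C\|f\|_{1}$; the paper takes $L^2$ as the weak norm and applies H\"older, and this is exactly where $\beta<-\frac{3}{4}$ enters (it makes $|y-c|^{-2-2\beta}$ square-integrable), not merely a compatibility condition as you describe. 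An $L^1$ weak norm could perhaps be rescued by interpolating $\|f\|_{2}\le\|f\|_{1}^{1/2}\|f\|_{W^{1,1}}^{1/2}$ and absorbing a small multiple of the strong norm, but as written the estimate does not close.
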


\section{Proof of Theorem \ref{thm:main}}\label{Sec3}
In this section we prove Theorem \ref{thm:main} in two steps. First, in subsection \ref{subsec:spec} we show that $L_{T_\eps}$ admits a uniform, in $\eps$, spectral gap when acting on $W^{1,1}$ and $W^{2,1}$. This implies, in particular, that $T_\eps$ has a unique invariant density $h_\eps\in W^{2,1}$. Then in subsection \ref{subsec:resp} we show that $\eps\mapsto h_\eps$ is differentiable in $L^1$ and obtain a formula for the derivative.

\subsection{Uniform spectral gap on  $W^{1,1}$ and on $W^{2,1}$}\label{subsec:spec}
In this section we prove that the transfer operators associated with our class of systems admit a uniform spectral gap when acting on suitable Sobolev spaces (Lemma \ref{lem:spec}). 
We obtain this as a consequence  of classical results which are recalled below.

\subsubsection{Classical results}
We first recall Lemma 2.2 of \cite{bgk} \ which is a modification of Theorem \
XIV.3 of \cite{HH}.

\begin{lemma}\label{BGK}
Let $(B,\|~\|_{s})$ be a Banach space, let $\|~\|_{w}$ be a continuous
semi-norm on $B$ and $Q$ a bounded linear operator on $B$, such that for any sequence $x_{n}$
with  $\|x_{n}\|_{s}\leq 1$ it contains a Cauchy subsequence for $\|~\|_{w}$. 

Assume there is $\lambda \geq 0$ and $C>0$ such that, for any $f\in B$
\begin{equation*}
\|Qf\|_{s}\leq \lambda \|f\|_{s}+C\|f\|_{w}.
\end{equation*}%
Then the essential spectral radius of $Q$ is bounded by $\lambda $.
\end{lemma}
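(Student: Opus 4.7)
The plan is to iterate the hypothesized Lasota--Yorke-type inequality and then invoke Nussbaum's formula for the essential spectral radius together with a simultaneous precompactness argument. By induction on $n$, from $\|Qf\|_{s}\leq \lambda \|f\|_{s}+C\|f\|_{w}$ I would first derive the iterated estimate
\[
\|Q^{n} f\|_{s}\;\leq\; \lambda^{n}\|f\|_{s}\;+\;C\sum_{k=0}^{n-1}\lambda^{\,n-1-k}\,\|Q^{k} f\|_{w}.
\]

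Next, fix $n$ and let $B_{1}^{s}$ denote the closed $\|\cdot\|_{s}$-unit ball of $B$. Since $Q$ is bounded on $B$, each $Q^{k}(B_{1}^{s})$ is $\|\cdot\|_{s}$-bounded and therefore $\|\cdot\|_{w}$-precompact, by the hypothesis that $\|\cdot\|_{s}$-bounded sequences contain $\|\cdot\|_{w}$-Cauchy subsequences. The joint map $\Psi_{n}: f\mapsto (f,Qf,\ldots,Q^{n-1}f)$ therefore sends $B_{1}^{s}$ into a set that is bounded in the product strong norm and precompact in the product weak semi-norm. Hence, for any $\eta>0$ there exist finitely many $f_{1},\ldots,f_{N}\in B_{1}^{s}$ such that every $f\in B_{1}^{s}$ satisfies $\max_{0\leq k\leq n-1}\|Q^{k}f-Q^{k}f_{j}\|_{w}<\eta$ for some index $j$; applied to $f-f_{j}$, the iterated inequality then gives
\[
\|Q^{n}f-Q^{n}f_{j}\|_{s}\;\leq\;2\lambda^{n}+C\eta\sum_{k=0}^{n-1}\lambda^{\,n-1-k}.
\]

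Thus $Q^{n}(B_{1}^{s})$ can be covered by $N$ balls in $(B,\|\cdot\|_{s})$ of radius $2\lambda^{n}+O(\eta)$. Letting $\eta\to 0$, the Kuratowski measure of non-compactness satisfies $\alpha(Q^{n})\leq 4\lambda^{n}$, and Nussbaum's formula $r_{\mathrm{ess}}(Q)=\lim_{n}\alpha(Q^{n})^{1/n}$ then yields $r_{\mathrm{ess}}(Q)\leq \lambda$, as required. The only real subtlety is the \emph{simultaneous} weak approximation across all iterates $Q^{k}f$ with $0\leq k\leq n-1$; this is what forces the passage to the product map $\Psi_{n}$ rather than applying weak-precompactness separately to each term of the iterated inequality.
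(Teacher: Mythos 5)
Your proof is correct. The paper itself gives no proof of this lemma --- it is quoted verbatim as Lemma 2.2 of the Bardet--Gou\"ezel--Keller reference \cite{bgk}, itself a modification of Theorem XIV.3 of Hennion--Herv\'e \cite{HH} --- and your argument is precisely the standard one behind those citations: iterate the Lasota--Yorke inequality, use the conditional $\|\cdot\|_w$-precompactness of $\|\cdot\|_s$-bounded sets to build a simultaneous finite $\eta$-net for $(f,Qf,\dots,Q^{n-1}f)$ with centers in the unit ball, deduce that $Q^n(B_1^s)$ is covered by finitely many strong balls of radius $2\lambda^n+O(\eta)$, and conclude via Nussbaum's formula $r_{\mathrm{ess}}(Q)=\lim_n\alpha(Q^n)^{1/n}$. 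Your emphasis on the need for the \emph{product} map $\Psi_n$ (rather than separate nets for each $Q^k f$) is exactly the right subtlety to flag, and the argument goes through for a semi-norm $\|\cdot\|_w$ since only total boundedness, not completeness, is used.
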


We also recall the main result of \cite{KL}, stating it in a simplified form suitable for our purposes.

\begin{lemma}\label{KL1}
Let $(B,\|~\|_{s})$ and $\|~\|_{w}$ as above, and $P_\varepsilon: B\to B$ with $\varepsilon \geq 0$ be a family of bounded linear operators.  Assume: there are $C_1>0 $ and $M\geq1$ such that
\begin{equation}\|P_\varepsilon^n\|_w\leq C_1M^n;
\end{equation}
there are  $C_2,C_3>0$ and  $0\leq \lambda < 1$  such that, for any $f\in B$, $\varepsilon \geq0$, $n\in \mathbb{R}$
\begin{equation}\label{ULY}
\|P_\varepsilon^n f\|_{s}\leq C_2 \lambda^{n} \|f\|_{s}+C_3M^n\|f\|_{w};
\end{equation}
$1$ is not in the spectrum of $P_\varepsilon$ for any $\varepsilon\ge 0$, 
\begin{equation}\label{eq:KLmixed}
\|P_0-P_\eps \|_{s\to w}\leq \tau(\varepsilon)
\end{equation}
where $\tau(\varepsilon) \to 0 $ monotonically and upper semicontinuously. Then there are $\varepsilon_0, a>0$ such that for all $0\leq \varepsilon  \leq \varepsilon_0$ and $f\in B$ 
\begin{equation}
\|(Id-P_\varepsilon )^{-1}f\|_s\leq a\| f\|_s
\end{equation}
and
\begin{equation}
\lim_{\varepsilon \to 0}\|(Id-P_0 )^{-1}-(Id-P_\varepsilon )^{-1}\|_{s\to w}=0.
\end{equation}
\end{lemma}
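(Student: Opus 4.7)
The plan is to prove this as the classical Keller--Liverani spectral stability theorem. The central difficulty is that $P_\varepsilon - P_0$ is small only in the mixed operator norm $\|\cdot\|_{s\to w}$, so the naive Neumann series for $R_\varepsilon(1) - R_0(1)$ based on $\sum_n [(P_\varepsilon - P_0)R_0(1)]^n$ does not converge in $\|\cdot\|_{s\to s}$.

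The preparatory step is quasi-compactness. Iterating \eqref{ULY} gives, for $n$ chosen with $C_2\lambda^n < 1$, an inequality $\|P_\varepsilon^n f\|_s \leq \tilde\lambda \|f\|_s + \tilde C\|f\|_w$ with $\tilde\lambda < 1$; Lemma BGK then yields an essential spectral radius bound strictly below $1$ for every $P_\varepsilon$. Combined with the hypothesis $1\notin\sigma(P_\varepsilon)$, each $R_\varepsilon(1):=(Id-P_\varepsilon)^{-1}$ is a bounded operator on $B$ individually; the non-trivial task is uniformity in $\varepsilon$.

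The main technical engine is the resolvent identity
\[
R_\varepsilon(z) - R_0(z) = R_\varepsilon(z)(P_\varepsilon - P_0) R_0(z),
\]
analysed on a small circle $\Gamma$ around $z=1$, lying in the resolvent set of $P_0$ and outside $\{|z|\leq \lambda\}$. Applied to $f$ with $\|f\|_s\leq 1$ the identity produces $R_\varepsilon(z)g$ with $g:=(P_\varepsilon-P_0)R_0(z)f$, where $\|g\|_w \leq \tau(\varepsilon)\|R_0(z)\|_{s\to s}$ while $\|g\|_s$ is only uniformly bounded. I would then truncate $R_\varepsilon(z) = \sum_{k=0}^{N-1} z^{-k-1}P_\varepsilon^k + z^{-N}P_\varepsilon^N R_\varepsilon(z)$ and bound the two pieces separately: the finite head contributes at most $C_1 N M^N \tau(\varepsilon)\|R_0(z)\|_{s\to s}$ in $w$-norm using the assumed $\|P_\varepsilon^k\|_w \leq C_1 M^k$, while the tail, after applying \eqref{ULY} to $P_\varepsilon^N$, produces a prefactor of order $\lambda^N \|R_\varepsilon(z)\|_{s\to s}$ plus a weak-norm remainder. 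Choosing $N \sim \log(1/\tau(\varepsilon))/\log(M/\lambda)$ balances these contributions.

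The main obstacle is running this bootstrap coherently, since the uniform strong-norm bound on $R_\varepsilon(z)$ one is trying to prove appears on both sides of the truncated identity. The key is that the prefactor $\lambda^N$ in the tail can be made arbitrarily small by \eqref{ULY}, so for large $N$ the offending term can be absorbed into the left-hand side, yielding a single constant $a$ such that $\|R_\varepsilon(z)\|_{s\to s}\leq a$ uniformly on $\Gamma$ for $\varepsilon\leq \varepsilon_0$. This delivers the first conclusion at $z=1$; reinserting the now-available uniform bound into the truncated identity and optimising $N$ against $\tau(\varepsilon)$ yields $\|R_\varepsilon(1) - R_0(1)\|_{s\to w}\to 0$, which is the second conclusion.
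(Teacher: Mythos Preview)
The paper does not supply its own proof of this lemma; it is quoted (in simplified form) from Keller--Liverani \cite{KL} as a black-box input to the later spectral analysis, so there is no in-paper argument to compare against. Your sketch accurately reproduces the Keller--Liverani strategy: uniform Lasota--Yorke gives quasi-compactness via Lemma~\ref{BGK}, the resolvent identity $R_\varepsilon-R_0=R_\varepsilon(P_\varepsilon-P_0)R_0$ is combined with the finite truncation $R_\varepsilon=\sum_{k<N}z^{-k-1}P_\varepsilon^k+z^{-N}P_\varepsilon^N R_\varepsilon$, and the $C_2\lambda^N$ prefactor on the tail is absorbed into the left-hand side to produce the uniform bound.

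One small clarification worth making explicit: for the first conclusion (the uniform strong-norm bound $\|R_\varepsilon(1)\|_{s\to s}\leq a$) you do not need the $\varepsilon$-dependent choice $N\sim\log(1/\tau(\varepsilon))/\log(M/\lambda)$. It suffices to fix $N$ large enough that $C_2\lambda^N<\tfrac12$ and then choose $\varepsilon_0$ so that $C_3M^N\tau(\varepsilon)<\tfrac12$ as well; the $\varepsilon$-dependent optimisation of $N$ is what delivers the quantitative rate in the second conclusion.
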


\subsubsection{Uniform spectral gap for the associated transfer operators}

Let $$W^{1,1}_0=\{f\in W^{1,1}: \int_0^1f dm=0\}.$$
We now state the main result of this section.
\begin{proposition}\label{lem:spec} { There is $\delta_2>0$ such that for any $\varepsilon \in [0,\delta_2) $}
$L_{T_\eps}$ admits a unique invariant density $h_\eps \in W^{2,1}$. Furthermore, $L_{T_\eps}$  has a uniform, in $\varepsilon$, spectral gap when acting on $W^{1,1}$ and $W^{2,1}$. In particular, $\exists$  $C>0$ such that for all $\varepsilon \in [0,\delta)$ 
\begin{equation}\label{resnorm}\|(Id-L_{T_\eps} )^{-1}\|_{W^{1,1}_0\to W^{1,1} }\leq C.
\end{equation}
\end{proposition}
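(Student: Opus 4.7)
My plan is to apply the two perturbative results recalled above: use Lemma~\ref{BGK} to bound the essential spectral radius of $L_{T_\varepsilon}$ on $W^{1,1}$ and on $W^{2,1}$, uniformly in $\varepsilon$, by proving a Lasota--Yorke type inequality; then use Lemma~\ref{KL1} to transfer the spectral gap from $\varepsilon=0$ to a uniform neighborhood. Simplicity of the eigenvalue $1$ at $\varepsilon=0$ follows in the standard way from topological mixing (A5) combined with the essential spectral radius bound.

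For the Lasota--Yorke estimate on $W^{1,1}$, I would compute $(L_{T_\varepsilon}f)'$ pointwise: on $(0,a_\varepsilon)$ it equals $\sum_{x\in T_\varepsilon^{-1}(y)}\bigl[f'(x)/(T'_\varepsilon(x)|T'_\varepsilon(x)|)-f(x)T''_\varepsilon(x)/(|T'_\varepsilon(x)|\,T'_\varepsilon(x)^{2})\bigr]$, and on $(a_\varepsilon,1)$ it vanishes. The potential boundary contribution at $y=a_\varepsilon$ disappears because $1/|T'_\varepsilon|\to 0$ there by (A6), so $L_{T_\varepsilon}f$ is continuous across $a_\varepsilon$ and no Dirac term appears; endpoints at $y=0$ lie on the boundary of $[0,1]$ and do not contribute to the weak derivative. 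A branchwise change of variables gives
\[
\|(L_{T_\varepsilon}f)'\|_{1}\leq\theta^{-1}\|f'\|_{1}+\|f\|_{\infty}\int_{0}^{1}\frac{|T''_\varepsilon(x)|}{T'_\varepsilon(x)^{2}}\,dx.
\]
By (A8), $|T''_\varepsilon/(T'_\varepsilon)^{2}|\lesssim |x-c|^{-\beta-1}$ uniformly in $\varepsilon$, and since $\beta\in(-1,-3/4)$ the exponent $-\beta-1\in(-1/4,0)$ is integrable, yielding a uniform constant $K$. Using the one-dimensional embedding $\|f\|_{\infty}\leq\|f\|_{1}+\|f'\|_{1}$ and iterating $L_{T_\varepsilon}^{n}$ (whose branches have slopes $\geq\theta^{n}$ by (A4)), I would obtain a uniform Doeblin--Fortet bound $\|L_{T_\varepsilon}^{n}f\|_{W^{1,1}}\leq\lambda^{n}\|f\|_{W^{1,1}}+C_{n}\|f\|_{1}$ with $\lambda<1$. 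Lemma~\ref{BGK} together with the compact embedding $W^{1,1}\hookrightarrow L^{1}$ then bounds the essential spectral radius of $L_{T_\varepsilon}$ on $W^{1,1}$ by $\lambda$. Differentiating once more yields analogous terms involving $T'''_\varepsilon/(T'_\varepsilon)^{3}$ and $(T''_\varepsilon)^{2}/(T'_\varepsilon)^{4}$, which by (A7)--(A8) are again uniformly $L^{1}$; this produces the same scheme on $W^{2,1}$ with weak norm $W^{1,1}$.

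For the uniform spectral gap I would apply Lemma~\ref{KL1} with strong norm $W^{1,1}$ and weak norm $L^{1}$. The uniform LY provides (\ref{ULY}); $\|L_{T_\varepsilon}^{n}\|_{L^1\to L^1}=1$ is automatic; topological mixing (A5), together with the essential radius bound, ensures that $1$ is a simple isolated eigenvalue of $L_{T_0}$, so $1\notin\mathrm{spec}(L_{T_0}|_{W^{1,1}_{0}})$. The remaining hypothesis (\ref{eq:KLmixed}) follows from (A9): by the representation $L_{T_\varepsilon}f=\psi_{1,\varepsilon}+\psi_{2,\varepsilon}$, the $L^{2}$-continuity at $\varepsilon=0$ gives $L_{T_\varepsilon}f\to L_{T_0}f$ in $L^{1}$ for each $f\in W^{1,1}$, and compactness of the $W^{1,1}$-unit ball in $L^{1}$ combined with the uniform $L^{1}$-boundedness of $L_{T_\varepsilon}$ promotes this pointwise convergence to uniform convergence on that ball, giving (\ref{resnorm}). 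Repeating the argument on $W^{2,1}$ identifies the fixed density $h_\varepsilon$ as an element of $W^{2,1}$.

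The main technical obstacle is the Lasota--Yorke step: the direct one-step bound only yields $\|(L_{T_\varepsilon}f)'\|_{1}\leq(\theta^{-1}+K)\|f'\|_{1}+K\|f\|_{1}$, where $K$ need not be smaller than $1-\theta^{-1}$. To obtain a genuine contraction one must iterate and exploit the fact that $T_\varepsilon^{n}$ retains the cusp power-law structure with magnified expansion $\theta^{n}$, combined with the localized bound $\|f\|_{L^{\infty}(J)}\leq |J|^{-1}\|f\|_{L^{1}(J)}+\|f'\|_{L^{1}(J)}$ on each branch-image $J$ of size $\sim\theta^{-n}$, in order to absorb $K$ into the weak norm for $n$ large — while keeping every estimate uniform in $\varepsilon$ via the uniform assumption (A8).
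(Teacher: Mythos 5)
Your overall architecture (Lasota--Yorke plus Lemma~\ref{BGK} for the essential spectral radius, Lemma~\ref{KL1} plus (A5) for the uniform gap, then a second pass on $W^{2,1}$) matches the paper. The divergence, and the gap, is in the Lasota--Yorke step itself, which is the heart of the proposition. You estimate the singular term by $\|\frac{T''_\eps}{(T'_\eps)^2}f\|_1\le \|f\|_\infty\int|T''_\eps/(T'_\eps)^2|$ and then invoke $\|f\|_\infty\le\|f\|_1+\|f'\|_1$, which puts a constant $K$ back in front of $\|f'\|_1$; as you yourself note, $\theta^{-1}+K$ need not be less than $1$, so you do not actually obtain inequality \eqref{ULY} and the rest of the argument has nothing to stand on. The iteration you propose to repair this is only gestured at, and it is not routine here: the standard ``small branch domains plus localized Sobolev bound'' argument requires $\sup_J\int_J|(T^n_\eps)''|/((T^n_\eps)')^2$ to become small uniformly over the branch domains $J$ of $T^n_\eps$ and uniformly in $\eps$, and the weight $T''_\eps/(T'_\eps)^2\sim|x-c|^{-\beta-1}$ is unbounded near the cusp, so this needs a uniform absolute-continuity estimate you have not supplied. (Also, it is the branch \emph{domains}, not the images, that have size $\lesssim\theta^{-n}$.)

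The paper avoids all of this with a one-line device: it takes $L^2$ (not $L^1$) as the weak norm and applies H\"older, so that $\|\frac{T''_\eps}{(T'_\eps)^2}f\|_1\le\|\frac{T''_\eps}{(T'_\eps)^2}\|_2\|f\|_2$ goes entirely into the weak norm, and the coefficient of $\|f'\|_1$ is just $\|1/T'_\eps\|_\infty\le\theta^{-1}<1$ in a \emph{single} step. This is exactly where the hypothesis $\beta\in(-1,-\tfrac34)$ is used: it makes $|x-c|^{-\beta-1}$ square-integrable, and at the $W^{2,1}$ level it makes the terms $\sim|x-c|^{-2\beta-2}$ square-integrable as well (your claim that these are ``uniformly $L^1$'' is true but insufficient for the same reason as above). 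With $L^2$ as weak norm, compactness of the unit ball comes from Rellich--Kondrakov, $\|L_{T_\eps}\|_{L^2\to L^2}\le 2$ replaces your $\|L_{T_\eps}\|_{L^1\to L^1}=1$, and hypothesis \eqref{eq:KLmixed} is supplied directly by (A9) rather than by your equicontinuity-plus-compactness argument. I would either adopt the $L^2$ weak norm and H\"older estimate, or fully carry out the iteration with the uniform smallness of $\int_J|(T^n_\eps)''|/((T^n_\eps)')^2$; as written, the contraction in \eqref{ULY} is not established.
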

To prove Proposition \ref{lem:spec} we apply Lemma \ref{BGK} and Lemma \ref{KL1}, we first prove few lemmas that will be used for this purpose and we start with an auxiliary lemma that verifies condition \ref{eq:KLmixed} in Lemma \ref{KL1}. First, consider the pointwise representation of the transfer operator associated with $T_\eps$  
\begin{equation}
(L_{T_\eps}f)(x)=\left\{ 
\begin{array}{c}
\sum_{y\in T_\eps^{-1}(x)}\frac{1}{|T_\eps^{\prime }(y)|}f(y)~if~x\in [0,a_\varepsilon] ;\\ 
0~if~x\in ( a_\varepsilon,1].%
\end{array}%
\right.  \label{rt}
\end{equation}

\begin{lemma}\label{lem:LYW11}
{ There are $0<\lambda<1,M\geq0$ such that
for any $\eps \in[0,\delta),$ $f\in W^{1,1}$ we have
$$\|(L_{T_\eps}f)^{^{\prime }}\|_{1}
\leq \lambda \|f^{\prime }\|_{1}+M\|f\|_{2}.$$}
\end{lemma}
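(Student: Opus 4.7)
The plan is to establish a standard Lasota--Yorke inequality on $W^{1,1}$, separating the ``transport'' contribution (tamed by the uniform expansion from (A4)) from the ``distortion'' contribution (tamed, through Cauchy--Schwarz, by the quantitative control of the cusp provided by (A8)).

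First, I would write $L_{T_\eps}f=\psi_{1,\eps}+\psi_{2,\eps}$ as in (A9) and verify that each $\psi_{j,\eps}$ extends continuously by zero across the endpoint $x=a_\eps$ where its branch ends: by (A6), $1/|T_\eps'|$ vanishes as one approaches the cusp, so $\psi_{j,\eps}(x)\to 0$ as $x\to a_\eps^{-}$, and hence no boundary $\delta$-function enters the weak derivative of $L_{T_\eps}f$. Thus $L_{T_\eps}f$ will lie in $W^{1,1}$ as soon as the pointwise estimates below integrate.

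Second, I would differentiate each branch via the chain rule. Setting $y=T_{j,\eps}^{-1}(x)$ and using $dy/dx=1/T_\eps'(y)$,
\[
(\psi_{j,\eps})'(x)=\frac{f'(y)}{T_\eps'(y)\,|T_\eps'(y)|}-\frac{\mathrm{sgn}(T_\eps'(y))\,f(y)\,T_\eps''(y)}{T_\eps'(y)^{3}}.
\]
Taking absolute values, changing variables $x=T_\eps(y)$, $dx=|T_\eps'(y)|\,dy$, and summing the two branches (whose preimages tile $[0,1]\setminus\{c\}$), I get
\[
\|(L_{T_\eps}f)'\|_{1}\le \int_{0}^{1}\frac{|f'(y)|}{|T_\eps'(y)|}\,dy+\int_{0}^{1}\frac{|f(y)|\,|T_\eps''(y)|}{T_\eps'(y)^{2}}\,dy.
\]
The first integral is bounded by $\theta^{-1}\|f'\|_{1}$ through (A4), yielding the contraction constant $\lambda:=1/\theta<1$.

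Third, for the second integral I would invoke (A8) to obtain the uniform-in-$\eps$ bound $|T_\eps''(y)|/T_\eps'(y)^{2}\le K\,|y-c|^{-\beta-1}$, and then apply Cauchy--Schwarz:
\[
\int_{0}^{1}|f(y)|\,|y-c|^{-\beta-1}\,dy\le \|f\|_{2}\,\bigl\| |y-c|^{-\beta-1}\bigr\|_{L^{2}(0,1)}.
\]
The weight's $L^{2}$ norm is finite because $2(-\beta-1)>-1$, i.e.\ $\beta<-1/2$, which is comfortably guaranteed by the standing hypothesis $\beta<-3/4$. Collecting terms yields the claimed inequality with $M:=K\,\| |y-c|^{-\beta-1}\|_{L^{2}(0,1)}$, uniform in $\eps$ thanks to (A8).

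The only delicate point is the distortion integral: absent the cusp, $T_\eps''/(T_\eps')^{2}$ would be bounded and the classical Lasota--Yorke argument would produce $\|f\|_{1}$ on the right-hand side. Here one genuinely has to handle a power-law singularity, and the regularising role of the cusp is precisely that the exponent $-\beta-1$ lies in $(-1/4,0)$, well within the $L^{2}$ integrability range; this is what permits the $\|f\|_{2}$ bound rather than forcing one to spend derivative regularity of $f$. The rest of the work is bookkeeping to ensure every constant only depends on the data in (A4) and (A8), so that $\lambda$ and $M$ are uniform across $\eps\in[0,\delta)$.
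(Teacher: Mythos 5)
Your proposal is correct and follows essentially the same route as the paper: the same branch-wise differentiation giving the transport term $f'/(T_\eps'|T_\eps'|)$ and distortion term $f\,T_\eps''/(T_\eps')^{3}$, the bound $\lambda=\theta^{-1}$ from (A4), and the H\"older/Cauchy--Schwarz step pairing $\|f\|_{2}$ with the $L^{2}$ norm of $T_\eps''/(T_\eps')^{2}\sim|y-c|^{-\beta-1}$, uniform in $\eps$ by (A8). The only cosmetic difference is that you perform the change of variables $x=T_\eps(y)$ explicitly where the paper invokes the $L^{1}$-contraction $\|L_{T_\eps}g\|_{1}\le\|g\|_{1}$; these are the same computation.
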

\begin{proof}
By \eqref{rt} for $f\in W^{1,1}$, we have
\begin{equation}\label{der1}
(L_{T_\eps}f)^{^{\prime }}(x)=\left\{ 
\begin{array}{c}
\sum_{y\in T_\eps^{-1}(x)}\frac{f^{\prime }(y)}{|T_\eps^{\prime }(y)|T_\eps^{\prime }(y)}-\frac{%
T_\eps^{\prime \prime }(y)}{(T_\eps^{\prime }(y))^{3}}f(y)~if~x\in [0,a_\varepsilon] ;\\ 
0~if~x\in ( a_\varepsilon,1].
\end{array}%
\right. 
\end{equation} 
Notice that $L_{T_\eps}f \in W^{1,1}$. Indeed since $f$ is bounded,\ by $\eqref{rt}$ $\lim_{x\rightarrow a_{\varepsilon}}L_{T_\eps}f=0$. \ $L_{T_\eps}f$ is then continuous, with derivative almost everywhere. Furthermore, we note that in \eqref{der1} when $x\to a_{\varepsilon}$ then $y\to c$ and\footnote{Here the asymptotic equivalence $f\sim g$ stands for $\lim_{x\to c}\frac{|f(x)|}{|g(x)|}=C$  with $0<C<+\infty.$ } $\frac{T_\eps^{\prime \prime }(y)}{(T_\eps^{^{\prime }})^{3}(y)}\sim \frac{|y-c|^{\beta -1}}{|y-c|^{3\beta }}=|y-c|^{-2\beta-1}$ . Therefore,   
$\frac{T_\eps^{\prime \prime }(y)}{(T_\eps^{^{\prime }})^{3}(y)}\rightarrow 0$ as $
y\rightarrow c,$ this shows that $L_{T_\eps}f\in W^{1,1}.$

Now, observe that
\begin{equation}
(L_{T_\eps}f)^{^{\prime }}=L_{T_\eps}(\frac{1}{T_\eps^{\prime }}f^{\prime })-L_{T_\eps}(\frac{%
T_\eps^{\prime \prime }}{{|T_\eps^{^{\prime }}|T_\eps^{^{\prime }}}}f).  \label{preLY}
\end{equation}

By \eqref{preLY} we get
\begin{eqnarray}\label{LY1}
\|(L_{T_\eps}f)^{^{\prime }}\|_{1} &\leq &\|L_{T_\eps}(\frac{1}{T_\eps^{\prime }}f^{\prime
})\|_{1}+\|L_{T_\eps}(\frac{T_\eps^{\prime \prime }}{(T_\eps^{^{\prime }})^{2}}f)\|_{1} \\
&\leq &\|\frac{1}{T_\eps^{\prime }}f^{\prime }\|_{1}+\|\frac{T_\eps^{\prime \prime }}{
(T_\eps^{^{\prime }})^{2}}f\|_{1} \\
&\leq &\|\frac{1}{T_\eps^{\prime }}\|_{\infty }\|f^{\prime }\|_{1}+\|\frac{%
T_\eps^{\prime \prime }}{(T_\eps^{^{\prime }})^{2}}f\|_{1} \\
&\leq &\lambda \|f^{\prime }\|_{1}+\|\frac{T_\eps^{\prime \prime }}{(T_\eps^{^{\prime
}})^{2}}\|_{2}\|f\|_{2}  \label{LY1b}
\end{eqnarray}
{ where for each $\eps$,} $\lambda :=\|\frac{1}{T_\eps^{\prime }}\|_{\infty }<1$ and   $\|\frac{T_\eps^{\prime \prime }}{(T_\eps^{^{\prime }})^{2}}
\|_{2}<+\infty $. Note that furthermore by assumptions (A4) and (A8) the quantities $\|\frac{1}{T_\eps^{\prime }}\|_{\infty }$  and $\|\frac{T_\eps^{\prime \prime }}{(T_\eps^{^{\prime }})^{2}}
\|_{2}$  can be  bounded uniformly for $\eps\in[0,\delta)$.  
\end{proof}

\begin{remark}\label{rmk01}
The choice of $L^2 $ as a weak space is due to the way the H\"older inequality is applied in \ref{LY1b}, and to the fact that with the standing assumptions $(A6),...,(A8)$ imply
$\|\frac{T_\eps^{\prime \prime }}{(T_\eps^{^{\prime }})^{2}}\|_{2}$ is bounded.
By changing the weak space to another $L^p$ with $p\geq2$ one can allow different power law behavior for the singularity than the one in our assumptions and still get a Lasota Yorke inequality. Moreover, in such a setting, the compact embedding of the strong space into the weak one is still granted by the Rellich-Kondracov theorem.  
\end{remark}

\begin{lemma}\label{l:LYW21}
There exists $M\geq0 $ such that for each $\varepsilon \in[0,\delta)$
\begin{equation}\label{eq:LYW21}
\|L_{T_\eps}f\|_{W^{2,1}}\leq\lambda^{2}\|f\|_{W^{2,1}}+ M\|f\|_{W^{1,1}}
\end{equation}
where $\lambda$ is the same as in Lemma \ref{lem:LYW11}.
\end{lemma}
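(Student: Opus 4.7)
The plan is to iterate the derivation formula already established in the proof of Lemma~\ref{lem:LYW11}. Applying the product rule in the form derived there, namely
\begin{equation*}
(L_{T_\eps}g)' \;=\; L_{T_\eps}\!\left(\frac{g'}{T_\eps'}\right) \;-\; L_{T_\eps}\!\left(\frac{T_\eps''}{|T_\eps'|T_\eps'}\, g\right),
\end{equation*}
first to $g=f$ and then to the two pieces of $(L_{T_\eps}f)'$, one arrives at an expression of the shape
\begin{equation*}
(L_{T_\eps}f)'' \;=\; L_{T_\eps}\!\left(\frac{f''}{(T_\eps')^{2}}\right) \;+\; L_{T_\eps}(A_\eps\, f') \;+\; L_{T_\eps}(B_\eps\, f),
\end{equation*}
where $A_\eps$ is a linear combination of $\frac{T_\eps''}{(T_\eps')^3}$ and similar cubic ratios, and $B_\eps$ collects all terms coming from a second derivative hitting a coefficient (so $B_\eps$ is a combination of $\frac{(T_\eps'')^2}{(T_\eps')^4}$ and $\frac{T_\eps'''}{(T_\eps')^3}$, plus lower order in derivatives of $T_\eps$).

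The leading term is then handled exactly as in Lemma~\ref{lem:LYW11}: since $L_{T_\eps}$ is $L^{1}$-contracting,
\begin{equation*}
\left\|L_{T_\eps}\!\left(\tfrac{f''}{(T_\eps')^{2}}\right)\right\|_{1}\;\le\; \left\|\tfrac{1}{(T_\eps')^{2}}\right\|_{\infty}\|f''\|_{1}\;\le\;\lambda^{2}\|f''\|_{1},
\end{equation*}
with the same $\lambda$ as before, uniformly in $\eps$ thanks to (A4) and (A8).

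For the two lower-order terms one tracks the asymptotics at the cusp using (A6)--(A8). Near $x=c$ one has $\frac{T_\eps''}{(T_\eps')^3}\sim |x-c|^{-2\beta-1}$ with $-2\beta-1\in(\tfrac12,1)$, so $A_\eps\in L^{\infty}$ uniformly in $\eps$; and $\frac{(T_\eps'')^{2}}{(T_\eps')^{4}}\sim\frac{T_\eps'''}{(T_\eps')^{3}}\sim |x-c|^{-2\beta-2}$ with $-2\beta-2\in(-\tfrac12,0)$, an integrable exponent, so $B_\eps\in L^{1}$ uniformly in $\eps$. Consequently
\begin{equation*}
\|L_{T_\eps}(A_\eps f')\|_{1}\le \|A_\eps\|_{\infty}\|f'\|_{1}\quad\text{and}\quad \|L_{T_\eps}(B_\eps f)\|_{1}\le \|B_\eps\|_{1}\|f\|_{\infty},
\end{equation*}
and using the one-dimensional embedding $W^{1,1}([0,1])\hookrightarrow L^{\infty}$ both quantities are bounded by $M\|f\|_{W^{1,1}}$ for some $M$ independent of $\eps$. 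Adding this to the bound on the first derivative from Lemma~\ref{lem:LYW11} and to $\|L_{T_\eps}f\|_{1}\le\|f\|_{1}$ yields the required Lasota--Yorke inequality~\eqref{eq:LYW21}.

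The main obstacle I anticipate is purely bookkeeping: verifying that every coefficient produced by differentiating $(L_{T_\eps}f)'$ twice has the claimed uniform $L^{\infty}$ or $L^{1}$ bound, since naive terms such as $\frac{T_\eps'''}{(T_\eps')^{2}}$ or $\frac{(T_\eps'')^{2}}{(T_\eps')^{3}}$ taken alone are \emph{not} integrable ($\sim|x-c|^{-\beta-2}$ with exponent in $(-\tfrac54,-1)$), and one must check that they appear only paired with an extra $1/T_\eps'$ factor so that the final exponent is $-2\beta-2>-1$. A secondary issue is justifying a priori that $L_{T_\eps}$ maps $W^{2,1}$ into itself; this is handled exactly as in Lemma~\ref{lem:LYW11} by showing that the boundary contribution at $x=a_\eps$ vanishes, because each relevant coefficient vanishes as $y\to c$ with a positive power of $|y-c|$.
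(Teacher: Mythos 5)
Your proposal follows essentially the same route as the paper's proof: the same iteration of the derivative formula $(L_{T_\eps}g)'=L_{T_\eps}(g'/T_\eps')-L_{T_\eps}\bigl(\tfrac{T_\eps''}{|T_\eps'|T_\eps'}g\bigr)$, the same $\lambda^{2}$ leading coefficient from $\|(1/T_\eps')^{2}\|_{\infty}$, and the same cusp asymptotics ($|x-c|^{-2\beta-1}$ for the coefficients of $f'$, $|x-c|^{-2\beta-2}$ for those of $f$), with uniformity in $\eps$ supplied by (A4) and (A8). The only (harmless) difference is that you bound the zeroth-order term via $\|B_\eps\|_{1}\|f\|_{\infty}$ and the embedding $W^{1,1}\hookrightarrow L^{\infty}$, whereas the paper uses the H\"older pairing $\|B_\eps\|_{2}\|f\|_{2}$ with $\|f\|_{2}\lesssim\|f\|_{W^{1,1}}$; both land in the same $M\|f\|_{W^{1,1}}$ remainder.
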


\begin{proof}
Differentiating $\eqref{preLY}$, we get  
\begin{equation}\label{eq:s1}
\begin{split}
(L_{T_\eps}f)^{\prime \prime }&=(L_{T_\eps}(\frac{1}{T_\eps^{\prime }}f^{\prime })-L_{T_\eps}(
\frac{T_\eps^{\prime \prime }}{{|T_\eps^{^{\prime }}|T_\eps^{^{\prime }}}}f))^{\prime } \\
&=(L_{T_\eps}(\frac{1}{T_\eps^{\prime }}f^{\prime }))^{\prime }-(L_{T_\eps}(\frac{T_\eps^{\prime \prime }}{|T_\eps^{^{\prime }}|T_\eps^{^{\prime }}}f))^{\prime } \\
&=L_{T_\eps}(\frac{1}{T_\eps^{\prime }}(\frac{1}{T_\eps^{\prime }}f^{\prime })^{\prime
})-L_{T_\eps}(\frac{T_\eps^{\prime \prime }}{|T_\eps^{^{\prime }}|T_\eps^{^{\prime }}}\frac{1}{T_\eps^{\prime }}
f^{\prime }) \\
&-[L_{T_\eps}(\frac{1}{T_\eps^{\prime }}(\frac{T_\eps^{\prime \prime }}{|T_\eps^{^{\prime }}|T_\eps^{^{\prime }}}f)^{\prime })-L_{T_\eps}(\frac{T_\eps^{\prime \prime }}{|T_\eps^{^{\prime }}|T_\eps^{^{\prime }}} \frac{T_\eps^{\prime \prime }}{|T_\eps^{^{\prime }}|T_\eps^{^{\prime }}}f)]
\end{split}
\end{equation}
where
\begin{equation}\label{eq:s2}
\begin{split}
L_{T_\eps}(\frac{1}{T_\eps^{\prime }}(\frac{1}{T_\eps^{\prime }}f^{\prime })^{\prime
})-L_{T_\eps}(\frac{T_\eps^{\prime \prime }}{|T_\eps^{\prime }| T_\eps^{\prime }}\frac{1}{T_\eps^{\prime }} f^{\prime }) &=L_{T_\eps}(\frac{1}{T_\eps^{\prime }}(\frac{-T_\eps^{\prime \prime }}{(T_\eps^{^{\prime }})^2}f^{\prime }+\frac{1}{T_\eps^{\prime }}f^{\prime \prime
}))-L_{T_\eps}(\frac{T_\eps^{\prime \prime }}{|T_\eps^{^{\prime }}|T_\eps^{^{\prime }}}\frac{1}{T_\eps^{\prime }}f^{\prime }) \\
&=L_{T_\eps}((\frac{1}{T_\eps^{\prime }})^{2}f^{\prime \prime }))-L_{T_\eps}([\frac{
T_\eps^{\prime \prime }}{(T_\eps^{^{\prime }})^{2}}+{ \frac{T_\eps^{\prime \prime }}{|T_\eps^{^{\prime }}|T_\eps^{^{\prime }}}    }]\frac{1}{T_\eps^{\prime }}f^{\prime })
\end{split}
\end{equation}
and%
\begin{equation}\label{eq:s3}
\begin{split}
L_{T_\eps}(\frac{1}{T_\eps^{\prime }}(\frac{T_\eps^{\prime \prime }}{|T_\eps^{^{\prime }}|T_\eps^{^{\prime }}}%
f)^{\prime }) &=L_{T_\eps}(\frac{1}{T_\eps^{\prime }}[(\frac{T_\eps^{\prime \prime }}{%
{|T_\eps^{^{\prime }}| T_\eps^{^{\prime }}    }})^{\prime }f+(\frac{T_\eps^{\prime \prime }}{|T_\eps^{^{\prime }}| T_\eps^{^{\prime }}})f^{\prime }]) \\
&=L_{T_\eps}(\frac{1}{T_\eps^{\prime }}(\frac{T_\eps^{\prime \prime \prime }{|T_\eps^{^{\prime
}}| T_\eps^{^{\prime
}} }-{ 2(|T_\eps^{\prime }|T_\eps^{\prime \prime })T_\eps^{\prime \prime }}}{(T_\eps^{^{\prime
}})^{4}})f+\frac{1}{T_\eps^{\prime }}(\frac{T_\eps^{\prime \prime }}{|T_\eps^{^{\prime
}}|  T_\eps^{^{\prime }}   })f^{\prime })).
\end{split}
\end{equation}

{ Since $\beta \in (-1,-\frac{3}{4})$, by assumptions $(A4)$ and then $(A6)$, $(A8)$ we have, as $y\to c$}

\begin{itemize}
\item $(\frac{1}{T_\eps^{\prime }})^{2}\in L^{\infty };$

\item $g_{1,a}:=\frac{T_\eps^{\prime \prime }}{|T_\eps^{^{\prime }}|T_\eps^{^{\prime }}}\frac{1}{%
T_\eps^{\prime }}\sim |y-c|^{-2\beta -1}$; i.e, $g_{1,a}\in L^{\infty };$

{\item $g_{1,b}:=\frac{T_\eps^{\prime \prime }}{(T_\eps^{^{\prime }})^2}\frac{1}{T_\eps^{\prime }}\sim |y-c|^{-2\beta -1}$; i.e, $g_{1,b}\in L^{\infty };$}

\item $g_{2}:=\frac{1}{T_\eps^{\prime }}(\frac{T_\eps^{\prime \prime \prime
}{|T_\eps^{^{\prime }}|T_\eps^{^{\prime }}}}{(T_\eps^{^{\prime }})^{4}})\sim |y-c|^{-2\beta
-2}$, ; i.e.,  $g_{2}\in L^{2};$

\item $g_{3}:=\frac{1}{T_\eps^{\prime }}(\frac{ 2(|T_\eps^{\prime }|T_\eps^{\prime \prime
})T_\eps^{\prime \prime }}{(T_\eps^{^{\prime }})^{4}})\sim |y-c|^{-2-2\beta
} $; i.e.,  $g_{3}\in L^{2};$

\item $g_{4}:=\frac{T_\eps^{\prime \prime }}{|T_\eps^{^{\prime }}|T_\eps^{^{\prime }}}\frac{T_\eps^{\prime
\prime }}{ |T_\eps^{^{\prime }}|T_\eps^{^{\prime }}}\sim |y-c|^{-2-2\beta }$; i.e.,  
$g_{4}\in L^{2}.$
\end{itemize}
{ Furthermore, assumption $(A8)$ implies that $||g_{1,a}||_\infty$, $||g_{1,b}||_\infty$, $||g_{2}||_2$, $||g_{3}||_2$, $||g_{4}||_2$ are uniformly bounded, as $\varepsilon \in [0,\delta)$.
}

Consequently, for $f\in W^{2,1}$ we get $L_{T_\eps}f\in W^{2,1}$. More precisely, using the notation above in  \eqref{eq:s1}, considering \eqref{eq:s2} and \eqref{eq:s3}, we have
\begin{eqnarray*}
(L_{T_\eps}f)^{\prime \prime } &{=} &L_{T_\eps}((\frac{1}{T_\eps^{\prime }})^{2}f^{\prime
\prime }))-L_{T_\eps}({ [g_{1,a}+g_{1,b}]}f^{\prime
})-L_{T_\eps}((g_{2}-g_{3})f)-L_{T_\eps}(g_{ 1,a}f^{\prime })+L_{T_\eps}(g_{4}f) \\
&{=} &L_{T_\eps}((\frac{1}{T_\eps^{\prime }})^{2}f^{\prime \prime
}))-L_{T_\eps}({[2g_{1,a}+g_{1,b}]}f^{\prime })-L_{T_\eps}((g_{2}-g_{3}{-}g_{4})f),
\end{eqnarray*}
and then
\begin{equation}\label{eq:s4}
\begin{split}
\|(L_{T_\eps}f)^{\prime \prime }\|_{1} &\leq \|L_{T_\eps}((\frac{1}{T_\eps^{\prime }}
)^{2}f^{\prime \prime }))\|_{1}{+}\|L_{T_\eps}({ [2g_{1,a}+g_{1,b}]}f^{\prime
})\|_{1}{+}\|L_{T_\eps}((g_{2}-g_{3}{-}g_{4})f)\|_{1} \\
&\leq \lambda^{2}\|f^{\prime \prime
}\|_{1}{+}\|{ [2g_{1,a}+g_{1,b}]}\|_{\infty }\|f^{\prime
}\|_{1}{+} \|g_{2}-g_{3}{-}g_{4}\|_{2}\|f\|_{2}.
\end{split}
\end{equation}
{ Since the $W^{1,1}$ norm bounds the $L^2$ norm, on the compact space $[0,1]$ and as we observed, the norms of the various $g_i$ are uniformly bounded as $\varepsilon$ varies}, thus $\exists$ $M\geq0 $ such that {for each $\varepsilon \in[0,\delta)$}
\begin{equation}\label{eq:LYW21}
\|L_{T_\eps}f\|_{W^{2,1}}\leq\lambda^{2}\|f\|_{W^{2,1}}+ M\|f\|_{W^{1,1}}.
\end{equation}
\end{proof}

{
 We now show that   the transfer operators are continuous in the $L^2$ norm. 

\begin{lemma}\label{cont}
   For each $\varepsilon \in [0,\delta)$ we have
 \begin{equation}\label{eq:L2b}
 \|L_{T_\eps}\|_{L^2\to L^2}\leq2.
 \end{equation}
  \end{lemma}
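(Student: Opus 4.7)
The plan is to estimate the $L^{2}$ norm of $L_{T_\eps}f$ directly by exploiting the two-branch decomposition introduced in assumption (A9) together with the uniform expansion in (A4). Writing, as in (A9), $L_{T_\eps}f = \psi_{1,\varepsilon}+\psi_{2,\varepsilon}$, the first step is the trivial inequality $(a+b)^{2}\le 2(a^{2}+b^{2})$, giving
\begin{equation*}
\|L_{T_\eps}f\|_{L^{2}}^{2}\;\le\;2\bigl(\|\psi_{1,\varepsilon}\|_{L^{2}}^{2}+\|\psi_{2,\varepsilon}\|_{L^{2}}^{2}\bigr).
\end{equation*}

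Next I would evaluate each $\|\psi_{j,\varepsilon}\|_{L^{2}}^{2}$ by changing variables $x=T_{j,\eps}(y)$ on the branch $j=0,1$, so that $dx=|T_\eps'(y)|\,dy$ and the indicator $1_{T_{j,\eps}(\cdot)}(x)$ disappears. This yields
\begin{equation*}
\|\psi_{j,\varepsilon}\|_{L^{2}}^{2}
= \int \frac{|f(y)|^{2}}{|T_\eps'(y)|^{2}}\cdot|T_\eps'(y)|\,dy
= \int_{\text{dom}_{j}} \frac{|f(y)|^{2}}{|T_\eps'(y)|}\,dy,
\end{equation*}
where $\text{dom}_{0}=[0,c)$ and $\text{dom}_{1}=(c,1]$. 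Summing over $j=1,2$ recovers an integral over all of $[0,1]$, and then (A4) gives $|T_\eps'(y)|\ge\theta>1$ uniformly in $\varepsilon$, so that
\begin{equation*}
\|\psi_{1,\varepsilon}\|_{L^{2}}^{2}+\|\psi_{2,\varepsilon}\|_{L^{2}}^{2}\;\le\;\frac{1}{\theta}\|f\|_{L^{2}}^{2}\;\le\;\|f\|_{L^{2}}^{2}.
\end{equation*}

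Combining the two displays produces $\|L_{T_\eps}f\|_{L^{2}}^{2}\le 2\|f\|_{L^{2}}^{2}$, hence $\|L_{T_\eps}\|_{L^{2}\to L^{2}}\le\sqrt{2}\le 2$, which is the claim. No obstacle is really anticipated: the only point requiring a bit of care is the fact that the images of the two branches overlap (both equal $[0,a_\varepsilon]$), which is precisely why the naive identity $\|\psi_{1,\varepsilon}+\psi_{2,\varepsilon}\|_{L^{2}}^{2}=\|\psi_{1,\varepsilon}\|_{L^{2}}^{2}+\|\psi_{2,\varepsilon}\|_{L^{2}}^{2}$ fails and the factor $2$ in the statement appears through the $(a+b)^{2}\le 2(a^{2}+b^{2})$ step. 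The uniformity in $\varepsilon$ is automatic since $\theta$ in (A4) is $\varepsilon$-uniform.
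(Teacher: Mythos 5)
Your proof is correct and follows essentially the same route as the paper: decompose $L_{T_\eps}f=\psi_{1,\varepsilon}+\psi_{2,\varepsilon}$, compute each $\|\psi_{j,\varepsilon}\|_{L^2}^2$ by the change of variables $x=T_{j,\eps}(y)$ (which the paper phrases equivalently as $\|L_{T_\eps}(f^2\cdot 1_{[0,c)})\|_{L^1}\le\|f^2\cdot 1_{[0,c)}\|_{L^1}$), and invoke (A4). The only difference is cosmetic: the paper combines the branches with the triangle inequality to get the constant $2$, while your $(a+b)^2\le 2(a^2+b^2)$ step yields the slightly sharper $\sqrt{2}$, which of course still gives the claimed bound.
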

  }
  \begin{proof}
  By  \eqref{rt} we have 
$$\|(L_{T_{\eps}}f)\|_{L^{2}}\leq \|\psi _{1,{\varepsilon }}\|_{L^{2}}+\|\psi
_{2,{\varepsilon }}\|_{L^{2}},$$ 
where 
\begin{equation}
\psi _{1,{\varepsilon }}(x):=(\frac{1}{|T_\eps'|}\cdot f)\circ T_{0,\eps}^{-1}(x)\cdot 1_{ T_{0,\eps}[0,c)}(x),
\label{eq:transeps}
\end{equation}
\begin{equation*}
\psi _{2,{\varepsilon }}(x):=(\frac{1}{|T_\eps'|}\cdot f)\circ T_{1,\eps}^{-1}(x)\cdot 1_{ T_{1,\eps}(c,1]}(x).
\end{equation*}%
Further,
\begin{eqnarray*}
\int_{\lbrack 0,1]}(\psi _{1,{\varepsilon }})^{2}dm
&=&\int_{[0,1]}[(\frac{1}{|T_{\eps}^{\prime }|}\cdot f)\circ
T_{0,\eps}^{-1}(x)\cdot 1_{T_{0,\eps}[0,c)}(x)]^{2}dx \\
&=&\int_{[0,a_{\varepsilon }]}\frac{1}{|T_{\eps}^{\prime }(T_{0,\eps%
}^{-1}(x))|}\frac{1}{|T_{\eps}^{\prime }(T_{0,\eps}^{-1}(x))|}\cdot \lbrack
f(T_{0,\eps}^{-1}(x))]^{2}dx \\
&\leq &\sup_x [\frac{1}{|T^{\prime }(x)|}]\int_{[0,a_{\varepsilon }]}\frac{1}{|T_{%
\eps}^{\prime }(T_{0,\eps}^{-1}(x))|}\cdot \lbrack f(T_{0,\eps%
}^{-1}(x))]^{2}dx \\
&=&\sup_x [\frac{1}{|T^{\prime }(x)|}]\|L_{T_{\eps}}(f^{2}\cdot
1_{[0,c)})\|_{L^{1}}\leq \sup_x [\frac{1}{|T^{\prime }(x)|}]\|f^{2}\cdot
1_{[0,c)}\|_{L^{1}} \\
&\leq &\sup_x [\frac{1}{|T^{\prime }(x)|}][\|f\|_{L^{2}}]^{2}.
\end{eqnarray*}
A similar estimate holds for $\|\psi
_{2,{\varepsilon }}\|^2_{L^{2}}$.
\end{proof}

\begin{proof}[Proof of Proposition \ref{lem:spec}]
{Using Lemma \ref{cont}, the Lasota Yorke inequalities proved in Lemma \ref{lem:LYW11} and the fact that $W^{1,1}  $ }is  compactly embedded in $ L^{2}$ by the Rellich-Kondrakov theorem, applying Lemma \ref{BGK},  we get that the essential  spectral radius $\rho_{\text{ess}}\le\lambda<1$.
By this, any element of the spectrum with modulus strictly bigger than $\lambda $ is an isolated eigenvalue. 
We now prove that spectral radius of $L_{T_\eps}$, as an operator on $W^{1,1}$, is $1$. 

Since $T\circ\bf{1}={\bf 1}$; $1$ is in the spectrum of $L_{T_\eps}$ and in fact it is an eigenvalue (since $\rho_{\text{ess}}\le\lambda<1)$. Moreover, $L_{T_\eps}$ cannot have eigenvalues $\rho$ with $|\rho|>1$ since $ L_{T_\eps}$  is positive and it preserves integrals (see \cite{BG} properties of transfer operators). Therefore $1$ is the spectral radius on $L_{T_\eps}$ as an operator on $W^{1,1}$.

We now show that there are no other eigenvalues on the unit circle and that the eigenvalue $1$ is simple. In the case of $T_0$ this is true since in (A5) we assume $T_0$ to be topologically mixing \cite{V}. Thus, $L_{T_0}$ has a spectral gap on $W^{1,1}$. Since for any $\eps\in[0,\delta)$, we proved in Lemma \ref{lem:LYW11} a uniform (in $\eps$) Lasota Yorke inequality for $L_{T_\eps}$, by  (A9) \eqref{near}, the Keller-Liverani \cite{KL} spectral perturbation result implies that for sufficiently small {$0<\eps \leq \delta_2$, } the spectral projections corresponding to isolated eigenvalues of $L_{T_\eps}$ are in one to one correspondence with those of $L_{T_0}$. Thus, $1$ is also a simple eigenvalue for $L_{T_\eps}$, for {$0<\eps \leq \delta_2$}, and $L_{T_\eps}$ as an operator on $W^{1,1}$ has no other eigenvalues on the unit circle. This in particular implies that $T_\eps$ admits a unique invariant probability density $h_\eps\in W^{1,1}$.



By Lemma \ref{lem:LYW11},  $L_{T_\eps}$ is bounded when acting on $W^{1,1}$. Moreover,  $W^{2,1}$ is compactly embedded in $W^{1,1}$ { and thus} Lemma \ref{BGK} also implies the essential spectral radius of $L_{T_\eps}$  when acting on ${W^{2,1}}$ is smaller than $\lambda$. By the same reasoning as before we get that the spectral radius is $1$. Since we have already proved that $h_\eps$ is the unique invariant density in $W^{1,1}$, it follows that $h_\eps$ is the unique invariant probability density in ${W^{2,1}}$ and by the uniform Lasota Yorke inequality proved in Lemma  \ref{l:LYW21} we have a uniform bound on its $W^{2,1}$ norm.

To prove \eqref{resnorm} we apply Lemma \ref{KL1} to $L_{T_\eps}$ when acting on $W^{1,1}_0$. Considering $\| \  \|_{L^2}$ as the weak norm.  First notice that $1$ is not in the spectrum of $L_{T_0}:W^{1,1}_0\to W^{1,1}_0$ and by Lemma  \ref{BGK} the essential spectral radius of $L_{T_\eps}$ is bounded by $\lambda$. By Lemma \ref{lem:LYW11} the operators $L_{T_\eps}$ indeed satisfy a uniform Lasota-Yorke Inequality.  iterating this inequality and using \eqref{eq:L2b} we get 
\begin{equation}
\|L_{T_\eps}^n f\|_{W^{1,1}}\leq \lambda^n \|f\|_{W^{1,1}}+M  2^n\|f\|_{w}
\end{equation} verifying assumption \eqref{ULY}. The assumption \eqref{eq:KLmixed} is verified in (A9) \eqref{near}, in particular, 
$$\|L_{T_\eps}-L_{T_0}\|_{W^{1,1}\to L^2}=\sup_{\|f\|_{W^{1,1}}\le1}\|(L_{T_\eps}-L_{T_0})f\|_{L^2}\to0$$
by (A9) \eqref{near}. The application of Lemma  \ref{KL1} gives then  directly \eqref{resnorm}.
\end{proof}

\subsection{Linear response derivation}\label{subsec:resp}

\begin{lemma}
${\varepsilon}\mapsto h_{\varepsilon}$ is differentiable, at $\eps=0$, in $L^1$. In particular, 
\begin{equation*}
h_{\varepsilon} = h_0 + {\varepsilon}(I-{L_{T_0}})^{-1}(q) + o({\varepsilon}%
),
\end{equation*}
Where 
\begin{equation}\label{eq:q}
q(x)=\left\{ 
\begin{array}{lr}
L_{T_0} [A_0 h_0^{\prime }+ B_0 h_0](x) &\text { for }x\in [0,a_0)\\
0&\text{ for } x\in  [a_0,1]
\end{array}
\right. .  
\end{equation}
with
\begin{equation*}
A_{\varepsilon}=-\left(\frac{\partial_{{\varepsilon}}T_{{\varepsilon}}}{
T^{\prime }_{{\varepsilon}}}\right),\hskip 0.5cm B_{\varepsilon}= \left(%
\frac{\partial_{{\varepsilon}} T_{{\varepsilon}}\cdot T_{{\varepsilon}%
}^{\prime \prime }}{T_{\varepsilon}^{\prime 2}}-\frac{\partial_{{\varepsilon}%
} T_{{\varepsilon}}^{\prime }}{ T_{\varepsilon}^{\prime }}\right)
\end{equation*}
and the $o$ is in the $L^1$-topology.
\end{lemma}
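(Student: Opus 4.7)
The plan is to invert the standard fixed-point identity using the resolvent bound supplied by Proposition~\ref{lem:spec}. From $h_\varepsilon = L_{T_\varepsilon} h_\varepsilon$ and $h_0 = L_{T_0} h_0$, straightforward algebra gives $(I - L_{T_0})(h_\varepsilon - h_0) = (L_{T_\varepsilon} - L_{T_0}) h_\varepsilon$. Both sides lie in $W^{1,1}_0$ (transfer operators preserve integrals, and Proposition~\ref{lem:spec} puts $h_\varepsilon, h_0$ in $W^{2,1}$), so \eqref{resnorm} yields the representation
$$h_\varepsilon - h_0 \;=\; (I - L_{T_0})^{-1}\bigl[(L_{T_\varepsilon} - L_{T_0}) h_\varepsilon\bigr].$$
Establishing linear response thus reduces to proving that $\frac{1}{\varepsilon}(L_{T_\varepsilon} - L_{T_0}) h_\varepsilon \to q$ in a topology in which the resolvent carries the convergence into $L^1$.

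I would first compute the formal derivative $\partial_\varepsilon L_{T_\varepsilon} h|_{\varepsilon=0}$ for a fixed $h \in W^{2,1}$. Using the branch decomposition $L_{T_\varepsilon} h = \psi_{1,\varepsilon} + \psi_{2,\varepsilon}$ from assumption (A9) together with the implicit-function relation $\partial_\varepsilon T_{j,\varepsilon}^{-1} = -(\partial_\varepsilon T_\varepsilon / T_\varepsilon')\circ T_{j,\varepsilon}^{-1}$, a direct chain-rule calculation on each branch gives
$$\partial_\varepsilon L_{T_\varepsilon} h\bigl|_{\varepsilon=0} \;=\; L_{T_0}\bigl(A_0 h' + B_0 h\bigr)\cdot \mathbf{1}_{[0,a_0)},$$
which matches the formula for $q$ in the case $h = h_0$. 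Combining this pointwise derivative with the joint continuity of the partials in (A9) and the fundamental theorem of calculus in $\varepsilon$ yields $\frac{1}{\varepsilon}(L_{T_\varepsilon} - L_{T_0}) h_0 \to q$ in $L^1$. The endpoint discrepancy from $a_\varepsilon \ne a_0$ is controlled by the cusp: since $L_{T_\varepsilon} f$ vanishes continuously as $x \to a_\varepsilon$ for any bounded $f$ (a consequence of (A6)--(A8), as already noted in the proof of Lemma~\ref{lem:LYW11}), the contribution on the strip $[\min(a_0,a_\varepsilon), \max(a_0,a_\varepsilon)]$ is $o(\varepsilon)$ in $L^1$.

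To replace $h_0$ with $h_\varepsilon$ inside the transfer-operator difference, I would split
$$(L_{T_\varepsilon}-L_{T_0})h_\varepsilon = (L_{T_\varepsilon}-L_{T_0})h_0 + (L_{T_\varepsilon}-L_{T_0})(h_\varepsilon - h_0),$$
treating the first piece by the step above and the second by combining Lemma~\ref{near}, which supplies $\|L_{T_\varepsilon} - L_{T_0}\|_{W^{1,1}\to L^2} \to 0$, with the statistical-stability bound $\|h_\varepsilon - h_0\|_{W^{1,1}} \to 0$ produced by the Keller--Liverani argument inside the proof of Proposition~\ref{lem:spec}. The principal obstacle I anticipate is quantifying both steps so that the convergence holds in a norm strong enough for $(I-L_{T_0})^{-1}$ to be bounded (essentially the $W^{1,1}_0$ norm), rather than merely in $L^1$. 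This is exactly where the cusp assumptions (A6)--(A8) and the uniform $W^{2,1}$ regularity of $h_\varepsilon$ from Proposition~\ref{lem:spec} become indispensable: without them, the boundary-strip estimate would fail and the well-known obstruction to linear response for classical tent-like maps with bounded derivatives would reappear.
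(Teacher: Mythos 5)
Your starting identity $h_\varepsilon-h_0=(I-L_{T_0})^{-1}\bigl[(L_{T_\varepsilon}-L_{T_0})h_\varepsilon\bigr]$ is algebraically correct, but it is the wrong arrangement of the fixed-point relation for this framework, and the obstacle you yourself flag at the end is fatal rather than technical. After splitting $(L_{T_\varepsilon}-L_{T_0})h_\varepsilon=(L_{T_\varepsilon}-L_{T_0})h_0+(L_{T_\varepsilon}-L_{T_0})(h_\varepsilon-h_0)$, you must show the second term is $o(\varepsilon)$ \emph{in a norm on which $(I-L_{T_0})^{-1}$ is bounded}, i.e.\ essentially in $W^{1,1}$. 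The only smallness of $L_{T_\varepsilon}-L_{T_0}$ available (Lemma~\ref{near}) is in the mixed $W^{1,1}\to L^2$ norm; the operators are \emph{not} close in $W^{1,1}\to W^{1,1}$, which is the whole reason the Keller--Liverani mixed-norm machinery is used in the first place. Moreover, even in $L^2$ your estimate is a product of two $o(1)$ quantities, which is $o(1)$, not $o(\varepsilon)$: upgrading it requires $\|h_\varepsilon-h_0\|_{W^{1,1}}=O(\varepsilon)$, which is not what Keller--Liverani delivers (it gives convergence in the weak $L^2$ norm plus a uniform $W^{1,1}$ bound) and which in effect presupposes the linear-response estimate you are trying to prove. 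Finally, even granting $o(\varepsilon)$ in $L^2$, the resolvent $(I-L_{T_0})^{-1}$ has no known bound on $L^2_0$ or $L^1_0$, so that convergence cannot be carried into $L^1$.

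The paper avoids all of this by using the other arrangement of the same algebra: $h_\varepsilon-h_0=(I-L_{T_\varepsilon})^{-1}(L_{T_\varepsilon}-L_{T_0})h_0$, with the \emph{perturbed} resolvent outside and the operator difference acting on the \emph{fixed} density $h_0$. Then $(L_{T_\varepsilon}-L_{T_0})h_0=\varepsilon q+o(\varepsilon)$ holds in the strong norm $W^{1,1}$ (by (A9) and the mean value theorem applied branchwise; your chain-rule computation of $q$ is correct and matches this step), the uniform bound \eqref{resnorm} absorbs the $o(\varepsilon)$ remainder, and the weak-norm information of Lemma~\ref{near} is needed only to show $G_\varepsilon q\to G_0 q$ in $L^1$ via the resolvent identity $G_\varepsilon q-G_0q=G_\varepsilon(L_{T_\varepsilon}-L_{T_0})G_0q$ --- a term that already carries an explicit factor of $\varepsilon$, so mere $o(1)$ convergence suffices there. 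If you insist on your decomposition you must first establish $\|h_\varepsilon-h_0\|_{W^{1,1}}=O(\varepsilon)$, and at that point you will have essentially reproduced the paper's argument.
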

\begin{proof}
We introduce the following notation: 
\begin{equation*}
H_{\varepsilon }:=L_{T_\varepsilon }-L_{T_0};\quad
G_{\varepsilon }:=(I-L_{T_\varepsilon })^{-1}.
\end{equation*}%
Since $L_{T_\varepsilon }$ has a spectral gap on $W^{1,1}$ it
eventually contracts exponentially on the subset of zero average functions $%
W^{1,1}_{0}$ and the following relation is well defined: 
\begin{equation}
h_{\varepsilon }=G_{\varepsilon }H_{\varepsilon }h_0+h_0.  \label{eq:GH}
\end{equation}%
 Recall that $h_0\in W^{2,1}$. Therefore, the second part of assumption $(A9)$ we have
\begin{equation}\label{eq:der1}
\Vert L_{T_\eps}h_0-L_{T_0}h_0-{\varepsilon }(\partial
_{\eps }L_{T_\eps}h_0|_{\eps =0 })\Vert
_{W^{1,1}}=o(\eps).
\end{equation}
Thus, by \eqref{eq:der1}, we have
\begin{equation}
H_{\varepsilon }h={\varepsilon }q+o({\varepsilon }), \label{eq:step1}
\end{equation}
for some $q\in W_0^{1,1}$, with the error $o(\eps)$ understood in $W^{1,1}$. To obtain a formula for $q$, for $x\in [0,a_\eps)$, set $g_{j,\eps}:=T_{j,\eps}^{-1}(x)$ and consider
\[
\begin{split}
\partial_\eps (h\circ g_{j,\eps}g_{j,\eps}')
&=
\partial_\eps(h\circ g_{j,\eps})g_{j,\eps}' + h\circ g_{j,\eps} \partial_\eps g_{j,\eps}'\\
&=
h'\circ g_{j,\eps} \partial_\eps g_{j,\eps}g_{j,\eps}' + h\circ g_{j,\eps} \partial_\eps g_{j,\eps}'.
\end{split}
\]
To prove the statement for $x\in[0,a_\eps)$, we start from the relation $T_\eps \circ g_{j,\eps}(x)=x$ and differentiate it  with respect to $\eps$ and get $T_\eps'\circ g_{j,\eps} \partial_\eps g_{j,\eps} + \partial_\eps T_\eps \circ g_{j,\eps}=0$. This gives $\partial_\eps g_{j,\eps} = A_\eps\circ g_{j,\eps}$.
This also implies that $\partial_\eps g_{j,\eps}' = A_\eps' \circ g_{j,\eps} g_{j,\eps}' = B_\eps\circ g_{j,\eps} g_{j,\eps}'$. This provides the formula for $q$ in \eqref{eq:q}.

To continue, recall that $L_{T_\eps}$ admits a uniform, in $\eps$, spectral gap on $W^{1,1}$. Therefore, $G_{\varepsilon }$ is uniformly bounded  in $\mathcal L (W_{0}^{1,1},W^{1,1})$ and we  have
\begin{equation}
G_{\varepsilon }H_{\varepsilon }h={\varepsilon }G_{\varepsilon }q+o({
\varepsilon }),  \label{eq:step2}
\end{equation}
where the above error is understood in $W^{1,1}$. 
Moreover, by the stability result of \cite{KL}, we get
 \begin{equation}
\lim_{{\varepsilon }\rightarrow 0}\Vert G_{\varepsilon }(q)-G_{0}(q)\Vert
_{L^1}=0.  \label{eq:step3}
\end{equation}%
Using \eqref{eq:step1}, \eqref{eq:step2} and \eqref{eq:step3} together with 
\eqref{eq:GH} we obtain in $L^1$ 
\begin{equation*}
h_{\varepsilon }=h_0+{\varepsilon }G_{0}(q)+o({\varepsilon }),
\end{equation*}%
which proves differentiability of $h_{\varepsilon }$ and completes the proof
of the theorem.
\end{proof}

\section{A family of maps satisfying  assumptions $(A1),...,(A9)$}\label{example}

\begin{figure}[h]
    \centering
\includegraphics[scale=0.5]{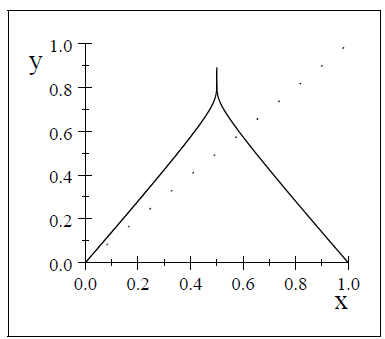}
    \caption{An example of $T_\varepsilon$ with $a_\varepsilon=0.89$ and $c=\frac{1}{2}$.}
    \label{fig:mesh1}
\end{figure}

We provide an explicit example of a family of maps satisfying assumptions $%
(A1),...,(A9)$. Let $\varepsilon \in \lbrack 0,\frac{1}{10})$ and 
\begin{equation}\label{eq:familyex}
T_{\varepsilon }(x)=\left\{ 
\begin{array}{c}
(1-\varepsilon )(\frac{3}{4}(2x)+\frac{1}{4}(1-\sqrt[8]{1-2x}))~\text{ for }~x\in
\lbrack 0,\frac{1}{2}) \\ 
(1-\varepsilon )(\frac{3}{4}(2-2x)+\frac{1}{4}(1-\sqrt[8]{2x-1}))~\text{ for }~x\in (%
\frac{1}{2},1].
\end{array}%
\right. 
\end{equation}%
The graph of one member of this family is shown in Figure \ref{fig:mesh1}.
In this example $c=\frac{1}{2}$ and it is immediate to see that $%
T_{\varepsilon }$ satisfies $(A1),(A2),(A3),$ $(A5).$ Furthermore 

\begin{equation*}
T_{\varepsilon }^{\prime }(x)=\left\{ 
\begin{array}{c}
\left( 1-\varepsilon \right) \left( \frac{1}{16(1-2x)}\left( \sqrt[8]{1-2x}%
+24(1-2x)\right) \right) ~\text{for}~x\in \lbrack 0,\frac{1}{2}) \\ 
\left( 1-\varepsilon \right) \left( \frac{-1}{16(2x-1)}\left( \sqrt[8]{2x-1}%
+24(2x-1)\right) \right) ~\text{for}~x\in (\frac{1}{2},1]%
\end{array}%
\right.
\end{equation*}%
and $|T_{\varepsilon }^{\prime }(x)|\geq \frac{45}{32}$ for $x\in \lbrack 0,%
\frac{1}{2})\cup (\frac{1}{2},1]$ and $\varepsilon \in \lbrack 0,\frac{1}{10})$%
. Thus, $T_{\varepsilon }$ satisfies $(A4)$. Also $T_{\varepsilon }^{\prime }$
satisfies $(A6)$ with $\beta =\frac{-7}{8}.$ Computing the second derivative we get
\begin{equation*}
T_{\varepsilon }^{\prime \prime }(x)=\left\{ 
\begin{array}{c}
\frac{7\left( 1-\varepsilon \right) }{64\left( 2x-1\right) ^{2}}\sqrt[8]{1-2x%
}~\text{fo}r~x\in \lbrack 0,\frac{1}{2}) \\ 
\frac{7\left( 1-\varepsilon \right) }{64\left( 2x-1\right) ^{2}}\sqrt[8]{2x-1%
}\allowbreak ~\text{for}~x\in (\frac{1}{2},1]%
\end{array}%
\right.
\end{equation*}%
and thus the first part of $(A7)$ is verified. Similarly one can proceed
with the third derivative and verify the second part of $(A7)$ and $(A8).$ We now verify $(A9)$. Let $T_{0,\varepsilon }$ and $T_{1,\varepsilon }$ be the
branches of $T_{\varepsilon }$, as in $(A1)$. We have $T_{i,\varepsilon
}=D_{\varepsilon }\circ T_{i,0}$, where $D_{\varepsilon }(x)=\left( 1-\varepsilon
\right) (x)$. Let $L_{D_{\varepsilon }},~L_{T_{0}}$ denote the transfer
operators associated with $D_{\varepsilon }$ and $T_{0}$ respectively. Note that
have 
$$L_{T_{\varepsilon }}=L_{D_{\varepsilon }}L_{T_{0}}$$ 
and for any $x\in \lbrack 0,1]$, 
$$L_{D_{\varepsilon
}}g(x)=\left\{ 
\begin{array}{c}
\left( 1-\varepsilon \right) ^{-1}g(\left( 1-\varepsilon \right) ^{-1}x)~\text{ for }~x\in (1-\varepsilon ]\\
 0 ~\text{ for }~x\in(1-\varepsilon,1].
\end{array}%
\right. $$
First we verify that
\begin{equation*}
\sup_{\Vert f\Vert _{W^{1,1}}\leq 1}\Vert (L_{T_{0}}-L_{T_{\varepsilon
}})f\Vert _{L^{2}}\rightarrow 0
\end{equation*}%
as $\varepsilon \rightarrow 0.$ Since we have already verified that $%
T_{\eps}$ satisfies $(A1),...,(A8),$ by Lemma 4 we obtain that $%
g:=L_{T_{0}}f\in W^{1,1}$ when $f\in W^{1,1}$. Furthermore there is an $M>0$ such
that $\Vert f\Vert _{W^{1,1}}\leq 1$ implies $\Vert g\Vert _{W^{1,1}}\leq M$. Then, it is sufficient to prove that
\begin{equation} \label{l2}
\sup_{\Vert g\Vert _{W^{1,1}}\leq M}\Vert g-L_{D_{\varepsilon
}}g||_{L^{2}}\rightarrow 0 
\end{equation}
as $\varepsilon \rightarrow 0.$
 For brevity we will also denote $\ L_{D_{\varepsilon }}g$ as
\begin{equation}\label{eq:notation}
L_{D_{\varepsilon }}g(x)=\left( 1-\varepsilon \right) ^{-1}1_{[0,1-\varepsilon
]}g(\left( 1-\varepsilon \right) ^{-1}x).
\end{equation}
We have
\begin{eqnarray*}
\Vert g-L_{D_{\varepsilon }}g||_{L^{2}}^{2} &=&\int_{0}^{1}(g-L_{D_{\varepsilon
}}g)(g-L_{D_{\varepsilon }}g)dm \\
&\leq &4M \int_{0}^{1}|g-L_{D_{\varepsilon }}g|dm \\
&\leq &4M \int_{0}^{1}|g(x)-\left( 1-\varepsilon \right)
^{-1}1_{[0,1-\varepsilon ]}(x)g(\left( 1-\varepsilon \right) ^{-1}x)|dx \\
&\leq &4M\int_{0}^{1-\varepsilon }|g(x)-\left( 1-\varepsilon \right)
^{-1}g(\left( 1-\varepsilon \right) ^{-1}x)|dx +8M^2\varepsilon 
\end{eqnarray*}

since $||g||_{\infty }\leq 2||g||_{W^{1,1}}$. Moreover,
\begin{equation*}
\int_{0}^{1-\varepsilon }|g(x)-\left( 1-\varepsilon \right) ^{-1}g(\left(
1-\varepsilon \right) ^{-1}x)|dx\leq 2M[1-\left( 1-\varepsilon \right)
^{-1}]+\int_{0}^{1-\varepsilon }|g(x)-g(\left( 1-\varepsilon \right) ^{-1}x)|dx
\end{equation*}%
and%
\begin{eqnarray}
\int_{0}^{1-\varepsilon }|g(x)-g(\left( 1-\varepsilon \right) ^{-1}x)|dx
&=&\int_{0}^{1-\varepsilon }|\int_{x}^{\left( 1-\varepsilon \right)
^{-1}x}g^{\prime }(t)dt|dx  \label{sopra} \\
&\leq &\int_{0}^{1-\varepsilon }\int_{x}^{\left( 1-\varepsilon \right)
^{-1}x}|g^{\prime }(t)|dtdx.  \notag
\end{eqnarray}

Now changing the order of integration we get 
\begin{eqnarray}
\int_{0}^{1-\varepsilon }\int_{x}^{\left( 1-\varepsilon \right) ^{-1}x}|g^{\prime
}(t)|dtdx &\leq &\int_{0}^{1}\int_{t-\varepsilon t}^{t}|g^{\prime }(t)|dxdt
\label{scambio} \\
&\leq &\int_{0}^{1}\varepsilon t|g^{\prime }(t)|dt
\end{eqnarray}%
\newline
The estimate in \eqref{scambio} is explained in Figure \ref{fig:scambio} where the domains of integration are shown. We note that the left hand side integral in \eqref{scambio} is considered over
the domain $OAB$ in Figure \ref{fig:scambio} while the right hand side integral is considered over the
larger domain $OAC$ in the same figure. Therefore,
\begin{figure}[h]
\label{fig:scambio}
\centering
\includegraphics[scale=0.5]{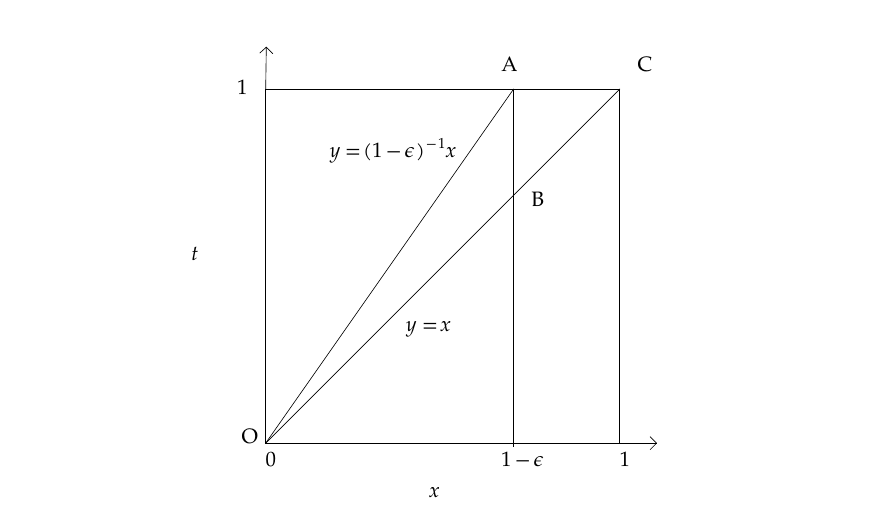}
\caption{The integration domains in \eqref{scambio}. }
\end{figure}
\begin{eqnarray*}
\int_{0}^{1-\varepsilon }|g(x)-g(\left( 1-\varepsilon \right) ^{-1}x)|dx &\leq
&\int_{0}^{1}\varepsilon t|g^{\prime }(t)|dt \\
&\leq &\varepsilon ||g^{\prime }||_{L^{1}} \\
&\leq &\varepsilon ||g||_{W^{1,1}}\rightarrow 0
\end{eqnarray*} 
and thus $(\ref{l2})$ is verified.

Now for $f\in W^{2,1}$ we prove that the limit
\begin{equation*}
\lim_{\varepsilon \rightarrow 0}\frac{1}{\varepsilon }(L_{T_{0}}-L_{T_{\varepsilon
}})f
\end{equation*}
exists in $W^{1,1}$. By the same reasoning as above,
using Lemma \ref{l:LYW21}, and using the notation \eqref{eq:notation}, we get this reduced to proving the
existence of the limit
\begin{equation*}
\lim_{\varepsilon \rightarrow 0}\frac{L_{D_{\varepsilon }}g-g}{\varepsilon }
=\lim_{\varepsilon \rightarrow 0}\frac{\left( 1-\varepsilon \right)
^{-1}1_{[0,1-\varepsilon ]}(x)g(\left( 1-\varepsilon \right) ^{-1}x)-g(x)}{
\varepsilon }
\end{equation*}
in $W^{1,1}$, where $g=L_{T_{0}}f$ as above\ and $g\in W^{2,1}$. \ Now we
have for each $x\in \lbrack 0,1]$
\begin{eqnarray*}
\frac{\left( 1-\varepsilon \right) ^{-1}1_{[0,1-\varepsilon ]}(x)g(\left(
1-\varepsilon \right) ^{-1}x)-g(x)}{\varepsilon } &=&\frac{1_{[0,1-\varepsilon
]}g(\left( 1-\varepsilon \right) ^{-1}x)-g(x)}{\varepsilon } \\
&&\hskip-3cm+\frac{\left( 1-\varepsilon \right) ^{-1}1_{[0,1-\varepsilon ]}(x)g(\left(
1-\varepsilon \right) ^{-1}x)-1_{[0,1-\varepsilon ]}(x)g(\left( 1-\varepsilon \right)
^{-1}x)}{\varepsilon } \\
&&\hskip-3cm:=I+II.
\end{eqnarray*}

\begin{eqnarray*}
II &=&\frac{\left( 1-\varepsilon \right) ^{-1}1_{[0,1-\varepsilon ]}(x)g(\left(
1-\varepsilon \right) ^{-1}x)-1_{[0,1-\varepsilon ]}(x)g(\left( 1-\varepsilon \right)
^{-1}x)}{\varepsilon } \\
&=&\frac{(1+~[\frac{\varepsilon }{1-\varepsilon }])1_{[0,1-\varepsilon ]}(x)g(\left(
1-\varepsilon \right) ^{-1}x)-1_{[0,1-\varepsilon ]}(x)g(\left( 1-\varepsilon \right)
^{-1}x)}{\varepsilon } \\
&=&\frac{1_{[0,1-\varepsilon ]}(x)g(\left( 1-\varepsilon \right) ^{-1}x)}{%
1-\varepsilon } \\
&\rightarrow &g(x).
\end{eqnarray*}%
Since $g\in W^{2,1}\subseteq W^{1,1}$ the convergence in $L^{1}$ of this
limit can be proved as in $(\ref{sopra}).$ Now considering the derivative,
since $L_{D_{\varepsilon }}g(x)\in W^{2,1}$ we have%
\begin{eqnarray*}
\lbrack 1_{[0,1-\varepsilon ]}(x)g(\left( 1-\varepsilon \right) ^{-1}x)]^{\prime }
&=&\left\{ 
\begin{array}{c}
\left( 1-\varepsilon \right) ^{-1}g^{\prime }(\left( 1-\varepsilon \right)
^{-1}x)\text{ for }x\in \lbrack 0,1-\varepsilon ] \\ 
0\text{ for }x\in (1-\varepsilon, 1 ]%
\end{array}%
\right. \\
&=&\left( 1-\varepsilon \right) ^{-1}1_{[0,1-\varepsilon ]}(x)g^{\prime }(\left(
1-\varepsilon \right) ^{-1}x)
\end{eqnarray*}
then%
\begin{equation*}
\lbrack \frac{1_{[0,1-\varepsilon ]}(x)g(\left( 1-\varepsilon \right) ^{-1}x)}{%
1-\varepsilon }-g(x)]^{\prime }=\frac{1_{[0,1-\varepsilon ]}(x)g^{\prime }(\left(
1-\varepsilon \right) ^{-1}x)}{(1-\varepsilon )^{2}}-g^{\prime }(x)
\end{equation*}%
since $g^{\prime }\in W^{1,1}$ we can repeat the same reasoning as in \eqref{scambio} and then get
the convergence in $L^{1}$ of the derivative too, leading to the $W^{1,1}$
convergence of the limit considered in $II$.

Now we consider the limit related to $I:$%
\begin{eqnarray*}
I &=&\frac{1_{[0,1-\varepsilon ]}(x)g(\left( 1-\varepsilon \right) ^{-1}x)-g(x)}{%
\varepsilon } \\
&=&\frac{1_{[0,1-\varepsilon ]}(x)g(x+\frac{\varepsilon }{1-\varepsilon }x)-g(x)}{%
\varepsilon } \\
&\rightarrow &xg^{\prime }(x)
\end{eqnarray*}%
for almost every $x$. To prove convergence in $L^{1}$, let us consider $%
h(x)\in C^{2}[0,1]$ with $h(0)=g(0)$ \ and $||g^{\prime }-h^{\prime
}||_{L^{1}}\leq \varepsilon _{2}$, where $\varepsilon _{2}>0$ is arbitrary. Observe that
\begin{eqnarray*}
Err(\varepsilon )&:=&\int_{0}^{1-\varepsilon }|\frac{1_{[0,1-\varepsilon ]}(x)g(x+%
\frac{\varepsilon }{1-\varepsilon }x)-g(x)}{\varepsilon }-xg^{\prime }(x)|dx \\
&=&\int_{0}^{1-\varepsilon }|\frac{\int_{x}^{x+\frac{\varepsilon }{1-\varepsilon }%
x}g^{\prime }(t)dt}{\varepsilon }-xg^{\prime }(x)|dx \\
&=&\int_{0}^{1-\varepsilon }|\frac{\int_{x}^{x+\frac{\varepsilon }{1-\varepsilon }%
x}g^{\prime }(t)dt-\int_{x}^{x+\frac{\varepsilon }{1-\varepsilon }x}h^{\prime
}(t)dt+\int_{x}^{x+\frac{\varepsilon }{1-\varepsilon }x}h^{\prime }(t)dt}{\varepsilon 
}-xg^{\prime }(x)|dx \\
&\leq &\int_{0}^{1-\varepsilon }\frac{|\int_{x}^{x+\frac{\varepsilon }{1-\varepsilon }%
x}g^{\prime }(t)dt-\int_{x}^{x+\frac{\varepsilon }{1-\varepsilon }x}h^{\prime
}(t)dt|}{\varepsilon }+|\frac{\int_{x}^{x+\frac{\varepsilon }{1-\varepsilon }%
x}h^{\prime }(t)dt}{\varepsilon }-xg^{\prime }(x)|dx.
\end{eqnarray*}%
We have 
\begin{equation*}
\int_{0}^{1-\varepsilon }\frac{|\int_{x}^{x+\frac{\varepsilon }{1-\varepsilon }%
x}g^{\prime }(t)dt-\int_{x}^{x+\frac{\varepsilon }{1-\varepsilon }x}h^{\prime
}(t)dt|}{\varepsilon }dx\leq \frac{\int_{0}^{1-\varepsilon }\int_{x}^{x+\frac{%
\varepsilon }{1-\varepsilon }x}|g^{\prime }(t)-h^{\prime }(t)|dtdx}{\varepsilon }.
\end{equation*}%
Now changing the order of integration as in $(\ref{scambio})$ we get 
\begin{equation*}
\frac{\int_{0}^{1-\varepsilon }\int_{x}^{x+\frac{\varepsilon }{1-\varepsilon }%
x}|g^{\prime }(t)-h^{\prime }(t)|dtdx}{\varepsilon }\leq \frac{%
\int_{0}^{1}\int_{t-\varepsilon t}^{t}|g^{\prime }(t)-h^{\prime }(t)|dxdt}{%
\varepsilon }.
\end{equation*}%
By this%
\begin{eqnarray*}
\int_{0}^{1-\varepsilon }\frac{|\int_{x}^{x+\frac{\varepsilon }{1-\varepsilon }%
x}g^{\prime }(t)dt-\int_{x}^{x+\frac{\varepsilon }{1-\varepsilon }x}h^{\prime
}(t)dt|}{\varepsilon }dx &\leq &\frac{\int_{0}^{1}\int_{t-\varepsilon
t}^{t}|g^{\prime }(t)-h^{\prime }(t)|dxdt}{\varepsilon } \\
&\leq &\frac{\int_{0}^{1}\varepsilon t|g^{\prime }(t)-h^{\prime }(t)|dt}{%
\varepsilon } \\
&\leq &\varepsilon _{2}
\end{eqnarray*}%
where the last inequality holds since \thinspace $0\leq t\leq 1$. On the other hand%
\begin{eqnarray*}
&&\int_{0}^{1-\varepsilon }|\frac{\int_{x}^{x+\frac{\varepsilon }{1-\varepsilon }%
x}h^{\prime }(t)dt}{\varepsilon }-xg^{\prime }(x)|dx \\
&&=\int_{0}^{1-\varepsilon }|%
\frac{\int_{x}^{x+\frac{\varepsilon }{1-\varepsilon }x}h^{\prime }(t)dt}{\varepsilon }%
-xh^{\prime }(x)-xg^{\prime }(x)+xh^{\prime }(x)|dx \\
&&\le\int_{0}^{1-\varepsilon }|\frac{\int_{x}^{x+\frac{\varepsilon }{1-\varepsilon }%
x}h^{\prime }(t)dt}{\varepsilon }-xh^{\prime }(x)|dx \\
&&+\int_{0}^{1-\varepsilon }|xg^{\prime }(x)-xh^{\prime }(x)|dx.
\end{eqnarray*}
Now we have $\int_{0}^{1-\varepsilon }|xg^{\prime }(x)-xh^{\prime }(x)|dx\leq
\varepsilon _{2}$ as before, and by uniform continuity of $h^{\prime }$ there
is a function $l(\varepsilon )$ such that $l(\varepsilon )\rightarrow 0$ as $%
\varepsilon \rightarrow 0,$ such that $|h^{\prime }(t)-h^{\prime }(x)|\leq
l(\varepsilon )$ for $t\in \lbrack x,x+\frac{\varepsilon }{1-\varepsilon }x]$, $x\in
\lbrack 0,1-\varepsilon ]$, then
\begin{eqnarray*}
&&\int_{0}^{1-\varepsilon }|\frac{\int_{x}^{x+\frac{\varepsilon }{1-\varepsilon }%
x}h^{\prime }(t)dt}{\varepsilon }-xh^{\prime }(x)|dx\\
&&\leq\int_{0}^{1-\varepsilon }|\frac{\int_{x}^{x+\frac{\varepsilon }{1-\varepsilon }%
x}h^{\prime }(t)-h^{\prime }(x)+h^{\prime }(x)dt}{\varepsilon }-xh^{\prime
}(x)|dx \\
&&\le \int_{0}^{1-\varepsilon }\frac{\int_{x}^{x+\frac{\varepsilon }{1-\varepsilon }%
x}|h^{\prime }(t)-h^{\prime }(x)|dt}{\varepsilon }dx \\
&&+\int_{0}^{1-\varepsilon }|\frac{1}{1-\varepsilon }xh^{\prime }(x)-xh^{\prime
}(x)|dx \\
&&\le\int_{0}^{1-\varepsilon }\frac{\int_{x}^{x+\frac{\varepsilon }{1-\varepsilon }%
x}l(\varepsilon )dt}{\varepsilon }dx+o(1)=o(1)
\end{eqnarray*}%
and $Err(\varepsilon )\leq 2\varepsilon _{2}+o(1)$ \ with \ $\varepsilon _{2}$ being
arbitrary and \ $o(1)\rightarrow 0$ as $\varepsilon \rightarrow 0$, hence $%
Err(\varepsilon )\rightarrow 0$ as $\varepsilon \rightarrow 0$ and we established
the $L^{1}$ convergence of $I$ to the function $x\rightarrow xg^{\prime }(x)$%
. \ Notice that we only used $g\in W^{1,1}$ in this proof. But for $g\in
W^{2,1}$ then $g^{\prime }\in W^{1,1}$. This will be used in the next step,
as now we are going to show the convergence of the same limit for the derivative. Indeed, we have
\begin{eqnarray}
I^{\prime } &=&[\frac{1_{[0,1-\varepsilon ]}(x)g(\left( 1-\varepsilon \right)
^{-1}x)-g(x)}{\varepsilon }]^{\prime }  \label{2} \\
&=&\frac{\left( 1-\varepsilon \right) ^{-1}1_{[0,1-\varepsilon ]}(x)g^{\prime
}(\left( 1-\varepsilon \right) ^{-1}x)-g^{\prime }(x)}{\varepsilon }  \notag \\
&=&\frac{1_{[0,1-\varepsilon ]}(x)g^{\prime }(\left( 1-\varepsilon \right)
^{-1}x)-g^{\prime }(x)}{\varepsilon }  \notag \\
&&+\frac{\left( 1-\varepsilon \right) ^{-1}1_{[0,1-\varepsilon ]}(x)g^{\prime
}(\left( 1-\varepsilon \right) ^{-1}x)-1_{[0,1-\varepsilon ]}(x)g^{\prime }(\left(
1-\varepsilon \right) ^{-1}x)}{\varepsilon }  \notag \\
&\rightarrow &g^{\prime }(x)+xg^{\prime \prime }(x)=[xg^{\prime
}(x)]^{\prime }  \notag
\end{eqnarray}%
which can be treated as before noting the similarity between the third
line of $(\ref{2})$ and $I$ and the similarity between the fourth line of $(%
\ref{2})$ and $II$. This concludes the verification of assumption $(A9)$
for the family of maps in \eqref{eq:familyex}.
\section{Concluding remarks}\label{Sec5}
We presented a class of tent-like maps with singulartities where the presence of a cusp with a certain power law behavior induces a regularization at the level of the transfer operator associated to the map. This leads to a spectral gap of the transfer operator when acting on suitable Sobolev spaces. Unlike perturbations of tent maps in the bounded derivative case, we have shown that this induces linear response of the invariant density under deterministic perturbations of the system that changes the image of the critical point.

It would be interesting to explore how far regularization effect of singularities can play a role in obtaining linear response for more general systems with singularities, and perhaps with discontinuities. 
To implement a suitable generalization of the approach of this paper in more general settings, further technical work is needed (see for instance Remark \ref{rmk01} for a comment on the suitable choice of the weak space). Such generalizations are strongly motivated by possible applications to Lorenz-like systems and billiards, as remarked at the end of the introduction.

\end{document}